\theoremstyle{plain}
\newtheorem{theorem}{Theorem}[section]
\newtheorem{prop}[theorem]{Proposition}
\newtheorem{corr}[theorem]{Corollary}
\newtheorem{lemma}[theorem]{Lemma}
\theoremstyle{definition}
\newtheorem{defn}[theorem]{Definition}
\newtheorem{example}[theorem]{Example}
\newtheorem{obs}[theorem]{Observation}
\newtheorem{question}[theorem]{Question}
\newtheorem{remark}[theorem]{Remark}
\DeclareMathOperator{\rk}{rk}
\DeclareMathOperator{\cork}{corank}
\DeclareMathOperator{\im}{Im}
\DeclareMathOperator{\Op}{\mathcal{O}p}
\DeclareMathOperator{\Sym}{Sym}
\DeclareMathOperator{\Sol}{Sol}
\newcommand{\Span}{\textrm{Span}}
\begin{document}
\title{On Horizontal Immersions of Discs in Fat Distributions of Type $(4,6)$}
\author[A. Bhowmick]{Aritra Bhowmick}

\address{Statistics and Mathematics Unit, Indian Statistical Institute\\ 203, B.T. Road, Kolkata 700108, India}
\email{avowmix@gmail.com}

\subjclass[2010]{58A30, 58J05, 58A15, 35J60, 53C23}

\keywords{h-principle, fat distribution, holomorphic contact distribution, elliptic PDE}

\begin{abstract}
In this article we discuss horizontal immersions of discs in certain corank-$2$ fat distributions on $6$-dimensional manifolds. The underlying real distribution of a holomorphic contact distribution on a complex $3$ manifold belongs to this class. The main result presented here says that the associated nonlinear PDE is locally invertible. Using this we prove the existence of germs of embedded horizontal discs.
\end{abstract}

\maketitle

\section{Introduction}
In subriemannian geometry one studies distributions on smooth manifolds. Bracket-generating distributions, which lie at the opposite end of the integrable ones, are the primary focus. A distribution $\calD\subset TM$ is \emph{bracket generating} if successive Lie brackets of local vector fields in $\calD$ around any point $x\in M$ span the tangent space $T_xM$.

\subsection{Immersions In a Manifold with a Distribution}
Given a distribution $\calD$, we can consider smooth curves $\gamma: \bbI \to M$ which are everywhere tangent to it. We shall call them \emph{$\calD$-horizontal curves} or simply \emph{horizontal curves}. These curves play an important role in understanding the distribution. In fact, if $\calD$ is bracket generating, any two points of the manifold $M$ can be joined by a smooth horizontal curve (\cite{chowBracketGenerating}) and the space of horizontal curves joining two points has the same homotopy type as the space of smooth curves joining them (\cite{geHorizontalCC,gromovCCMetric}).

More generally, one may consider horizontal immersions of $k$-manifolds in a manifold $M$ endowed with a distribution $\calD$. Such maps can be thought of as the solutions to the differential operator,
\begin{align*}
	\frD : C^\infty(\Sigma, M) &\to \Omega^1(\Sigma,\bbR^p) = \Gamma \hom(T\Sigma, \bbR^p)\\
	f &\mapsto \big(f^*\lambda^1,\ldots, f^*\lambda^p\big)
\end{align*}
where we assume that $\calD$ is a corank-$p$ distribution on $M$, given as the common kernel, $\calD = \cap_{i=1}^p \ker\lambda^i$, for $\lambda^i \in\Omega^1(M), i=1,\ldots, p$. Gromov defines (\cite[pg. 338]{gromovBook}) a $\calD$-horizontal immersion $f:\Sigma\to M$ to be \emph{regular} if the \emph{algebraic} system
$$d\lambda^i \big(\partial, \; u_* X_j\big) = \sigma_{ij}, \quad 1\le i \le p,\; 1\le j \le \dim\Sigma$$
is solvable for (local) vector fields $\partial \in \Gamma f^*\calD$, for any given set of arbitrary smooth functions $\{\sigma_{ij}\}$. Here, $\{X_j\}$ is some fixed local framing of $T\Sigma$. It follows that the linearization of $\frD$ at some $f : \Sigma\to M$ is invertible provided $f$ is regular. In fact, the linearization operator at a regular map $f$ has a $0^\text{th}$-order inversion. Then, using a version of the Nash implicit function theorem, Gromov (\cite{gromovBook}) proceeds to obtain the $h$-principle for regular $\calD$-horizontal immersions. This problem has been revisited with further details in \cite{gromovCCMetric, pansuCarnotManifold}.

Now, for any horizontal immersion $f$, $\im df_x$ is an isotropic subspace of $\calD_{f(x)}$. So, accounting for this isotropy, the above algebraic system is \emph{underdetermined} whenever $$\rk\calD - \dim\Sigma > \dim\Sigma \times \cork\calD.$$ Gromov proves (\cite[pg. 256]{gromovCCMetric}) that a \emph{generic} distribution $\calD$ on $M$ admits horizontal germs of immersions through generic points of $M$, under the above inequality. Then he proceeds to state the $h$-principle (\cite[pg. 258]{gromovCCMetric}) for $\calD$-horizontal immersions of $\Sigma \to M$, with some additional regularity condition, whenever $$\rk \calD\geq (\dim\Sigma + 1)(\cork\calD+1)$$
holds. He also conjectures (\cite[pg. 259]{gromovCCMetric}) that one may be able to improve the inequality to $\dim M\geq (\dim\Sigma +1)(\cork\calD+1)$.

\subsection{Horizontal Immersions in a Manifold with a Fat Distribution}
Fat distributions (see \autoref{sec:fatDistrbution}) form an interesting class of bracket generating distributions; a distribution $\calD$ is called \emph{fat} (or \emph{strongly bracket generating}) if any non-vanishing local section of $\calD$ Lie bracket generates the tangent space in $1$-step. Contact distributions are the prime examples of fat distributions on an odd dimensional manifold $M$; they are locally defined by 1-forms $\alpha$ such that $d\alpha$ is non-degenerate on $\ker\alpha$. Horizontal immersions for such distributions are well understood; in fact, they satisfy the parametric $h$-principle (\cite{duchampLegendre,eliashbergBook}).

In corank $1$, fatness is a generic property. However, for higher corank, fat distribution germs are never generic (see \autoref{rmk:fatNonGeneric}). In corank $2$, the most prominent examples of fat distributions are given by the holomorphic analogue of contact structures (see \autoref{exmp:contactHolomorphic}). These manifolds are modeled on the holomorphic 1-jet space $J^1(\bbC^n,\bbC)$ like their real counterparts, and 1-jet prolongation of any holomorphic  map $\bbC^n\to \bbC$ is a holomorphic Legendrian embedding. So, there are plenty of holomorphic horizontal submanifolds in any holomorphic contact manifold. In \cite{forstnericLegendrianCurves} the authors have shown that holomorphic Legendrian embeddings of an open Riemann surface $\Sigma$ into the standard holomorphic contact manifold $\big(\bbC^{2n+1},dz-\sum_i y_i dx_i\big)$ satisfy the parametric Oka principle. In particular, they prove that the space of Legendrian holomorphic embeddings $\Sigma\hookrightarrow \bbC^{2n+1}$ has the same homotopy type as the space of continuous maps $\Sigma\to \bbS^{4n-1}$. The authors further observe that such a global $h$-principle type result may not be true for a general holomorphic contact manifold.

Now, recall that for a given (real) contact structure $\ker\alpha$, we have the \emph{Reeb} vector field $R$, which is defined by $$\iota_R d\alpha|_{\ker\alpha} = 0,\quad \alpha(R) = 1.$$
Similarly, given a holomorphic contact structure $\Xi$ on a complex manifold $M$, the underlying real distribution $\calD$ can be locally written as $\calD = \ker\lambda^1 \cap \ker\lambda^2$ and furthermore, we can identify (local) vector fields $Z_1, Z_2$, such that $TM = \calD \oplus \Span \langle Z_1, Z_2\rangle$ and $Z_1, Z_2$ display Reeb-like properties. This motivates the definition of \emph{local Reeb like directions} (\autoref{defn:fatWithReeb}). Holomorphic contact distributions are the best known examples in the class of fat corank $2$ distributions considered in this article, which admit local Reeb directions.

Gromov's general theorem for regular horizontal immersions $\Sigma \to M$ is not applicable to such a distribution $\calD$ on a $6$-dimensional manifold $M$, whenever $\dim\Sigma = 2$, as there cannot exist any `regular' map due to dimension constraints. And yet the results of \cite{forstnericLegendrianCurves} indicates that there is still a possibility of obtaining an $h$-principle.

\subsection{Main Results}
In this article, we consider $\calD$ to be a corank $2$ fat distribution on a manifold of dimension $6$, which admits (local) Reeb directions, and then we study horizontal immersions of the closed unit disc $\Sigma=\mathbb D^2$ into $M$. 
The main theorem of this article may be stated as follows.
\begin{theorem}\label{thm:main}
	There exists an open subset $\frU\subset C^\infty(\bbD^2,M)$ such that for every $f:\bbD^2\to M$ in $\frU$, the linearization $\frL_f$ of the operator $\frD$ at $f$ admits a tame right inverse.
\end{theorem}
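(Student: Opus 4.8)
The plan is to reduce \autoref{thm:main} to a fibrewise linear-algebra statement about fat $(4,6)$ distributions, by exploiting the fact that $\frD$ has an order-zero part which, restricted to \emph{horizontal} variations of $f$, is governed precisely by the curvature forms $d\alpha_1$ and $d\alpha_2$. First I would record the linearisation: for a variation $f_t$ of $f$ with velocity $v\in\Gamma(f^*TM)$, Cartan's formula gives
\[
\frL_f(v)\;=\;\bigl(\,f^*(\iota_v\,d\alpha_1)+d(\alpha_1\!\circ v)\,,\ f^*(\iota_v\,d\alpha_2)+d(\alpha_2\!\circ v)\,\bigr),
\]
where $\alpha_i\!\circ v\colon\Sigma\to\bbR$ is the function $x\mapsto(\alpha_i)_{f(x)}(v_x)$ and $(f^*\iota_v\,d\alpha_i)_x(\xi)=(d\alpha_i)_{f(x)}(v_x,df_x\xi)$. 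Thus $\frL_f=\calB_f+(\text{first-order part})$, where $\calB_f\colon v\mapsto\bigl(f^*(\iota_v\,d\alpha_1),f^*(\iota_v\,d\alpha_2)\bigr)$ is a genuine bundle map and the remaining summand $v\mapsto\bigl(d(\alpha_1\!\circ v),d(\alpha_2\!\circ v)\bigr)$ is first order.

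Next I would restrict attention to horizontal variations. Using the Reeb vector fields of \autoref{defn:fatWithReeb} to split $f^*TM=f^*\calD\oplus f^*E$ with $E$ the rank-$2$ Reeb subbundle, one sees that for $v^h\in\Gamma(f^*\calD)$ the functions $\alpha_i\!\circ v^h$ vanish identically, whence $\frL_f(v^h)=\calB_f(v^h)$ \emph{on the nose}. Restricted to $f^*\calD$, the map $\calB_f$ goes from the rank-$4$ bundle $f^*\calD$ to the rank-$4$ bundle $T^*\Sigma\otimes\bbR^2$. So I would set
\[
\calU:=\bigl\{\,f\in C^\infty(\Sigma,M)\ :\ \calB_f|_{f^*\calD}\ \text{is a fibrewise isomorphism}\,\bigr\},
\]
which is manifestly $C^1$-open, and for $f\in\calU$ take $\frL_f^{-1}(\beta_1,\beta_2):=(\calB_f|_{f^*\calD})^{-1}(\beta_1,\beta_2)$, regarded as a section of $f^*\calD\subset f^*TM$. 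Then $\frL_f\circ\frL_f^{-1}=\mathrm{id}$ is immediate from $\frL_f|_{\Gamma(f^*\calD)}=\calB_f|_{f^*\calD}$.

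The heart of the matter is to see that $\calU\neq\emptyset$. Fixing a horizontal $f$ and a point $x$, and writing $P=df_x(T_x\Sigma)\subset\calD_{f(x)}$ and $\omega_i=d\alpha_i|_\calD$, the map $\calB_f|_{f^*\calD}$ is injective at $x$ iff no nonzero $w\in\calD_{f(x)}$ is simultaneously $\omega_1$- and $\omega_2$-orthogonal to $P$, i.e.\ iff $P^{\perp_{\omega_1}}\cap P^{\perp_{\omega_2}}=0$. Here fatness is decisive: non-degeneracy of the whole pencil $\lambda_1\omega_1+\lambda_2\omega_2$ is equivalent to the ``ratio'' endomorphism $S$ of $\calD_{f(x)}$, defined by $\omega_2(\,\cdot\,,\,\cdot\,)=\omega_1(S\,\cdot\,,\,\cdot\,)$, having no real eigenvalue; from this one checks that $P^{\perp_{\omega_1}}\cap P^{\perp_{\omega_2}}=0$ precisely for $P$ in a dense open subset of the Grassmannian of $2$-planes of $\calD_{f(x)}$ (for holomorphic contact distributions this is exactly the set of \emph{totally real} planes $P\cap JP=0$; in general the condition is read off from the normal form of the pencil $\{\omega_\lambda\}$ supplied by \autoref{defn:fatWithReeb}). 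It then suffices to exhibit a single horizontal immersion $\Sigma\to M$ whose tangent $2$-planes lie everywhere in this general-position locus — which exists by the flexibility of horizontal immersions, or by an explicit model in a Darboux-type chart for the fat structure — giving $\calU\neq\emptyset$.

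Finally, the tameness, which I expect to be routine. On $\calU$, the operator $\frL_f^{-1}$ is of order zero in $(\beta_1,\beta_2)$, and its coefficients are smooth functions of the $1$-jet of $f$ with nowhere-vanishing denominator $\det\calB_f$; the estimates $\|\frL_f^{-1}\beta\|_n\lesssim\|\beta\|_n+\|f\|_{n+1}\,\|\beta\|_0$ then follow from the standard tame estimates for composition with a fixed smooth map and for inverting matrix-valued functions — no loss of derivatives in $\beta$, a single derivative lost in $f$. I expect essentially all the real work to lie in the fibrewise step: extracting from fatness and the Reeb structure of \autoref{defn:fatWithReeb} the precise general-position condition on $df_x(T_x\Sigma)$, and checking it can be imposed simultaneously at every point of $\Sigma$, so that $\calU$ is not merely non-empty but large enough to drive the local $h$-principle announced in the introduction.
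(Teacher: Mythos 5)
There is a genuine gap, and it sits exactly at the point the paper identifies as the central difficulty. Your plan is to invert the order-zero part $\calB_f|_{f^*\calD}$ fibrewise, i.e.\ to use an $\Omega$-regularity-type condition. But the kernel of $\calB_f|_{f^*\calD}$ at $x$ is $P^{\perp_1}\cap P^{\perp_2}$ with $P=\pi_\calD(\im df_x)$, and for a \emph{horizontal} immersion $P$ is isotropic for both $\omega_1$ and $\omega_2$, so $P\subset P^{\perp_1}\cap P^{\perp_2}$ (in fact, by \autoref{lemma:invariant}, $AP=P$ and $P^{\perp_1}\cap P^{\perp_2}=(P+AP)^{\perp_1}=P^{\perp_1}=P$). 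Hence $\calB_f|_{f^*\calD}$ has a two-dimensional kernel at \emph{every} point of \emph{every} horizontal immersion; your general-position locus is disjoint from the isotropic planes, and the horizontal immersion "in general position" that your non-emptiness argument requires cannot exist. This is precisely the obstruction $\rk\calD=4<6=3\dim\Sigma$ discussed in \autoref{sec:inversion}: no $\Omega$-regular horizontal map exists, so no order-zero right inverse supported in $\Gamma(f^*\calD)$ is available. Your set $\calU$ is indeed open and non-empty (it contains suitably generic \emph{non}-horizontal immersions), but it excludes all horizontal maps, so $\frD|_{\calU}$ never takes the value $0$ and the resulting inverse is useless for the h-principle the theorem is meant to feed.

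The paper's proof necessarily goes through the Reeb directions rather than around them. Writing $\partial=\partial_0+aZ_1+bZ_2$ with $\partial_0\in f^*\calD$, the Reeb components contribute the \emph{first-order} terms $da,db$ to $\frL_f(\partial)=(da+f^*\iota_{\partial_0}d\alpha_1,\,db+f^*\iota_{\partial_0}d\alpha_2)$. One imposes the auxiliary conditions $d\alpha_1(\partial_0,JX)=d\alpha_1(\partial_0,JY)=0$ for a compatible almost complex structure $J$, uses the connecting automorphism $A$ to express the $\alpha_2$-equations through the $\alpha_1$-pairings, and eliminates $\partial_0$; the absence of real eigenvalues of $A$ (fatness) makes the resulting first-order system \eqref{eqn:generalSystemSecondSetEliminated} for $(a,b)$ elliptic. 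Solving the Dirichlet problem gives $(a,b)$, and $\partial_0$ is then recovered pointwise from the nondegeneracy of $\omega_1$. The open set $\frU$ of \autoref{defn:admissibleMaps} is cut out by the conditions (immersion, transversality to $\langle Z_1,Z_2\rangle$, totally real projected tangent plane, negative-definiteness of the block $A_{11}$) that keep this elimination elliptic, and it \emph{does} contain horizontal immersions. Consequently the right inverse is tame of degree $0$ as a composition of an elliptic solution operator with a bundle-map inverse, but it is not a differential operator of order zero as your tameness discussion assumes.
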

The open subset $\frU$ consists of solutions to certain open first order differential relation in $J^1(\Sigma,M)$. We observe that the linearization $\frL_f$ of $\frD$ at $f\in\frU$, factors as a composition of two linear operators, one of which is elliptic. In particular, the inverse of $\frL_f$ in \autoref{thm:main} is not a differential operator unlike in the case when $f$ is a `regular' map in the sense of Gromov.

Applying the Nash-Hamilton Implicit Function Theorem, due to R. Hamilton (\cite{hamiltonNashMoser}), for smooth tame operators between tame Fr\'echet spaces we obtain the following result.
\begin{theorem}
	$\frD$ is locally invertible on $\frU$.
\end{theorem}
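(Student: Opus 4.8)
The plan is to derive the statement from \autoref{thm:main} by means of the Nash--Moser inverse function theorem of Hamilton (\cite{hamiltonNashMoser}). Three things have to be verified: that the source and target of $\frD$ are, respectively, a tame Fr\'echet manifold and a tame Fr\'echet space; that $\frD$ itself is a smooth tame map; and that the right inverse of the linearization $\frL_f$ supplied by \autoref{thm:main} for $f\in\frU$ can be organized into a smooth tame family of fibrewise-linear maps as $f$ varies in a neighbourhood inside $\frU$. With these in hand, Hamilton's theorem --- in the form in which the linearization is required only to possess a smooth tame family of right inverses, and which then yields a smooth tame local right inverse of the nonlinear operator --- applies at once, and since $\frU$ is open the conclusion is obtained on all of $\frU$.

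For the first point, $C^\infty(\bbD^2,M)$ is a tame Fr\'echet manifold: near a map $f$ it is modelled on $\Gamma(f^*TM)$, the smooth sections over the compact manifold-with-boundary $\bbD^2$ of the pulled-back tangent bundle, graded by the $C^k$-norms; likewise $\Omega^1(T\Sigma,\bbR^2)=\Gamma(T^*\Sigma\otimes\bbR^2)$ is a tame Fr\'echet space. Spaces of smooth sections of vector bundles over compact manifolds, with or without boundary, are the standard examples of tame spaces in \cite{hamiltonNashMoser}. For the second point, $\frD(f)=(f^*\alpha_1,f^*\alpha_2)$ is a nonlinear first-order differential operator whose coefficients are assembled from $f$ and the fixed smooth forms $\alpha_1,\alpha_2$; tameness of composition and of (nonlinear) differential operators then makes $\frD$ a smooth tame map, whose linearization at $f$ is precisely the operator $\frL_f$ of \autoref{thm:main}.

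The third point is the substantial one. By hypothesis $\frU$ is the solution set of an open first-order differential relation in $J^1(\Sigma,M)$, so $\frU$ is open in $C^\infty(\bbD^2,M)$; fix $f_0\in\frU$ and a neighbourhood $\mathcal{U}'\subseteq\frU$ of $f_0$. The right inverse of $\frL_f$ produced in the proof of \autoref{thm:main} is assembled, for each $f\in\frU$, from the inversion of an elliptic operator on the fixed compact surface-with-boundary $\bbD^2$ and from a pointwise-linear, order-zero bundle homomorphism --- this being the factorization of $\frL_f$ referred to above. The ellipticity and the pointwise surjectivity entering this construction are open conditions varying smoothly with $f$, hence they hold uniformly over a possibly smaller $\mathcal{U}'$; elliptic theory on $\bbD^2$ then gives a right inverse of the elliptic factor with estimates uniform over $\mathcal{U}'$ and with smooth, tame dependence on its coefficients, while the algebraic right inverse of the bundle factor depends tamely on $f$ as well. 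Composing these, one obtains a smooth tame family $V_f\colon\Omega^1(T\Sigma,\bbR^2)\to T_fC^\infty(\bbD^2,M)$ with $\frL_f\circ V_f=\mathrm{id}$ for every $f\in\mathcal{U}'$.

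Given this family, Hamilton's theorem yields a neighbourhood of $\frD(f_0)$ in $\Omega^1(T\Sigma,\bbR^2)$ and a smooth tame map $\Psi$ defined on it with $\frD\circ\Psi=\mathrm{id}$; thus $\frD$ is locally invertible near $f_0$, and since $f_0\in\frU$ was arbitrary, on all of $\frU$. I expect the main obstacle to be exactly this third point: promoting the single right inverse of \autoref{thm:main} to a \emph{tame family} in $f$ --- controlling the elliptic estimates uniformly as $f$ ranges over $\mathcal{U}'$ and verifying that passage through the elliptic inversion and the bundle inversion preserves tame dependence on $f$ (the loss of finitely many derivatives is harmless for Nash--Moser, but the tame estimates for solution operators of elliptic boundary problems and for inverses of smoothly varying invertible bundle maps must be combined with care).
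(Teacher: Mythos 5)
Your proposal is correct and follows essentially the same route as the paper: the paper likewise deduces local invertibility from Hamilton's theorem, using that the section spaces over the compact disc are tame, that $\frD$ is a first-order (hence tame) operator, and that the right inverses $\frM_f$ --- built from an elliptic Dirichlet problem plus an algebraic bundle inversion --- assemble into a smooth tame family $\frM:\frU\times\Omega^1(\Sigma,\bbR^2)\to C^\infty(\Sigma,M)$. The ``third point'' you flag as the substantial one is exactly the step the paper also treats briefly, appealing to the tameness of solutions of elliptic boundary value problems from \cite{hamiltonNashMoser}.
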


Next, we obtain jets of infinitesimal solutions to $\frD$. An application of the local invertibility then gives us the following local $h$-principle.

\begin{theorem}\label{thm:hPrinciple}
	Horizontal maps $\mathbb D^2\to M$ satisfy the local h-principle.
\end{theorem}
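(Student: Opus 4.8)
The plan is to deduce \autoref{thm:hPrinciple} from the local invertibility of $\frD$ on $\mathcal U$ by the standard argument passing from local solvability to the local $h$-principle. Let $\calR\subset J^1(\Sigma,M)$ be the first order relation whose holonomic sections are the horizontal maps,
\[\calR=\bigl\{\,(x,y,L)\in J^1(\Sigma,M)\ :\ \alpha_1(y)\circ L=0=\alpha_2(y)\circ L\,\bigr\};\]
as $\alpha_1,\alpha_2$ are pointwise independent, $L\mapsto\bigl(\alpha_1(y)\circ L,\alpha_2(y)\circ L\bigr)$ is fibrewise surjective, so $\calR$ is a smooth subbundle of $J^1(\Sigma,M)$, and by construction $\mathcal U=\{f:j^1f(\Sigma)\subset\Omega\}$ for the open relation $\Omega$ isolated above. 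A formal solution is a section $F$ of $\calR$ taking values in $\Omega$; the local $h$-principle asserts that, over an arbitrarily small disc about any prescribed point and after a homotopy through such sections, $F$ becomes the $1$-jet of an honest horizontal map lying in $\mathcal U$, and parametrically so. Since the claim concerns only a small disc, I would fix $x_0\in\Sigma$ and work near it.

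First I would produce good osculating maps. Given the jet $\sigma=(x_0,y_0,L_0)=F(x_0)\in\calR\cap\Omega$, solve $\frD(g)=0$ formally to order $N$ at $x_0$ keeping $j^1g(x_0)=\sigma$: the order-zero condition $\alpha_i(y_0)\circ L_0=0$ holds since $\sigma\in\calR$, and for $1\le m<N$ the order-$m$ condition fixes the degree-$(m{+}1)$ part of the Taylor expansion of $g$ in terms of the lower jets, because $T\mapsto\bigl(\alpha_1(y_0)\circ T,\alpha_2(y_0)\circ T\bigr)$ maps $\Sym^{m+1}T_{x_0}^*\Sigma\otimes T_{y_0}M$ onto $\Sym^{m+1}T_{x_0}^*\Sigma\otimes\bbR^2$. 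This yields a polynomial map $g=g_N$ with $j^1g(x_0)=\sigma$ and $\frD(g)$ vanishing to order $N$ at $x_0$; restricting to a disc $\bbD^2_\epsilon$ about $x_0$, openness of $\Omega$ gives $g|_{\bbD^2_\epsilon}\in\mathcal U$, while $\|\frD(g)\|_{C^k(\bbD^2_\epsilon)}=O(\epsilon^{N-k})$, so $\frD(g)$ is as small as desired in each seminorm of the tame Fr\'echet structure once $N$ is large and $\epsilon$ small. All of this is done continuously in a compact family of jets $\sigma_z$ by, e.g., taking minimal-norm solutions of the successive linear systems.

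Next I would correct $g$ to an exact solution. By \autoref{thm:main} the linearization $\frL_g$ has a tame right inverse, so Hamilton's implicit function theorem provides a neighbourhood $\calV$ of $\frD(g)$ in $\Omega^1(T\Sigma,\bbR^2)$ and a smooth tame map $\Psi\colon\calV\to\mathcal U$ with $\Psi(\frD(g))=g$ and $\frD\circ\Psi=\mathrm{id}_{\calV}$, uniformly over a compact family. Taking $N$ large and $\epsilon$ small enough that $0\in\calV$, set $f:=\Psi(0)$; then $f$ is a genuine horizontal map in $\mathcal U$, $C^\infty$-close to $g$ in the tame topology, whence $j^1f$ is $C^0$-close to $\sigma$ near $x_0$, hence to $F$ over $\bbD^2_\epsilon$ (shrinking $\epsilon$ if need be). To upgrade this to a homotopy, fix a tubular-neighbourhood retraction $\rho$ of the subbundle $\calR$ and put $F_t:=\rho\bigl((1-t)F+t\,j^1f\bigr)$; by closeness the straight path stays in a neighbourhood of $\calR$ and in $\Omega$, so $F_t$ is a homotopy within $\calR\cap\Omega$ from $F$ to the holonomic section $j^1f$, all of whose underlying maps lie in $\mathcal U$. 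Performing the construction continuously over a compact parameter space, using continuity (indeed tameness) of $\Psi$ in the parameter, gives the parametric local $h$-principle as well.

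The genuinely hard analytic input is already spent in \autoref{thm:main} and the local invertibility theorem; the one delicate point in this deduction is the quantitative matching in the correction step, namely that the osculation order $N$ can be chosen large enough — relative to the fixed derivative loss in Hamilton's theorem and to the $\epsilon$-dependence of the neighbourhood $\calV$ — to force $0\in\calV$. Concretely one must verify a Nash-Moser scaling estimate on $\bbD^2_\epsilon$: the tame bounds for $\frL_g^{-1}$ should degenerate at most polynomially in $\epsilon$, while $\|\frD(g_N)\|$ decays like a high power of $\epsilon$. I expect this to be the main obstacle; the remaining ingredients — the formal integrability used in the first step, the retraction $\rho$, and the soft fact that $\calR\cap\Omega$ admits the sections the $h$-principle is about (immediate from the description of $\Omega$) — are routine.
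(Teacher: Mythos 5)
There are two genuine gaps, and they sit exactly at the two points where your route diverges from the paper's.

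First, your formal-solution step is wrong as stated. You take the formal data to be a section of the first-order relation $\calR=\{\alpha_i\circ L=0\}$ and claim that the order-$m$ vanishing of $\frD(g)$ at $x_0$ can always be arranged by choosing the degree-$(m{+}1)$ Taylor coefficient of $g$, because the symbol $T\mapsto(\alpha_1(y_0)\circ T,\alpha_2(y_0)\circ T)$ is onto. But the $m$-th derivative of the section $g^*\alpha_i$ of $T^*\Sigma\otimes\bbR^2$ lives in $\Sym^m T^*_{x_0}\Sigma\otimes T^*_{x_0}\Sigma\otimes\bbR^2$, and the top jet of $g$ only contributes through the fully symmetric part; the antisymmetrized part is governed by $d(g^*\alpha_i)=g^*d\alpha_i$ and is therefore determined by the \emph{lower} jets. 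Already at $m=1$ this forces $d\alpha_i(L_0 u,L_0 v)=0$, i.e.\ the prescribed $1$-jet must be common isotropic for $\omega_1,\omega_2$ — a nontrivial closed condition not contained in $\calR$ — and there is no argument that the prolongation is unobstructed at higher orders either. This is precisely why the paper does not take $1$-jets as the formal data: its local $h$-principle is formulated for sections of $\calR_r=\calR^r\cap(p^{r+1}_1)^{-1}(\calA)\subset J^{r+1}(\Sigma,M)$, i.e.\ the input is an \emph{infinitesimal solution of sufficiently high order} $r$ which is admissible (cf.\ the caveat in the introduction, ``provided there exists infinitesimal germs of some finite order''). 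Your version would prove a strictly stronger statement than \autoref{thm:hPrinciple} as the paper intends it, and the extra strength is exactly what your step 1 fails to deliver.

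Second, your correction step relies on a quantitative claim you yourself flag as unproven: that the Nash--Moser neighbourhood $\calV$ on the shrinking disc $\bbD^2_\epsilon$ degenerates at most polynomially in $\epsilon$, so that $\|\frD(g_N)\|=O(\epsilon^{N-k})$ eventually lands in it. Nothing in \autoref{thm:solution} gives such uniformity in the domain, and establishing it would be a substantial piece of analysis. The paper avoids the issue entirely: it works on a \emph{fixed} neighbourhood $V_0$ of $\sigma$, uses the hypothesis $j^r_{-g_0}(\sigma)=0$ (available only because the formal datum lies in $\calR^r$) to build a cut-off perturbation $g_\epsilon$ with $g_\epsilon=-g_0$ near $\sigma$ and $|g_\epsilon|_r<\epsilon$, and then sets $f_t=\frD_{f}^{-1}(g_0+tg_\epsilon)$, which is horizontal on the smaller set $W$ at $t=1$ and simultaneously provides the required homotopy of germs. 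So both the missing prolongation and the missing scaling estimate are circumvented in the paper by the same device — taking high-order infinitesimal solutions as the formal input — and without that device your argument does not close.
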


We also prove the existence of horizontal germs.

\begin{theorem}\label{thm:existenceOfGerms}
	There exist germs of $\calD$-horizontal submanifolds of dimension $2$.
\end{theorem}

The article is arranged as follows. In \autoref{sec:basics} we first discuss the preliminaries of fat distributions. In \autoref{sec:inversion} we prove our main result : the local invertibility of the nonlinear differential operator $\frD$. Then in \autoref{sec:hPrinicple} we derive the local h-principle for horizontal maps and prove the existence of germs of such maps. In order to make the article self-contained, we briefly outline in \autoref{sec:hamiltonIFT} the background and the statement of the Implicit Function Theorem for differential operators between Frechet spaces following \cite{hamiltonNashMoser}, as this has been crucially used in \autoref{sec:inversion}.

\section{Basic Notions}
\label{sec:basics}
In this section we recall the notion of fat distribution and then focus on distributions which are of corank $2$. 
\subsection{Bracket-Generating Distributions}
A distribution $\calD$ on a manifold $M$ is a subbundle of the tangent bundle $TM$. A distribution $\calD$ can equivalently be identified with its sheaf of sections $\Gamma\calD$. Hence by $X\in\calD$ we will mean that $X$ is a local section of $\calD$. We denote by $[\calD,\calD]$ the sheaf of all vector fields on $M$ which are obtained by taking Lie brackets of two sections in $\calD$. The sheaf $[\calD,\calD]$ need not be associated to a distribution, since it may fail to have constant rank. We define recursively, for $i\geq 1$,
$$\calD^{i+1} = \calD^i + [\calD,\calD^i], \quad \calD^1 = \calD.$$

\begin{defn}
	$\calD$ is said to be \emph{bracket generating} if, at each point $x\in M$, we have, $\calD^r|_x=T_x M$, for some positive integer $r$, possibly depending on $x$.
\end{defn}

\begin{example}\label{eg:contactDist}
	A corank 1 distribution $\xi\subset TM$ on $M$ is defined locally as the kernel of a 1-form $\alpha$ on $M$. The distribution $\xi$ is said to be a \emph{contact distribution} if $\alpha\wedge(d\alpha)^n\ne 0$. The property does not depend on the choice of defining 1-form $\alpha$. By the Darboux theorem (\cite{geigesBook}), $\alpha$ has a local normal form $\alpha = dz - \sum_{i=1}^n y_i dx_i$. It is then easy to check that, $$\xi \underset{loc.}{=} \Span \langle \partial_{y_i}, \partial_{x_i} + y_i \partial_z\rangle.$$
	Since $[\partial_{y_i},\partial_{x_i} + y_i \partial_z] = \partial_z$, we see that $TM =\xi + [\xi,\xi] = \xi^2$. Hence, contact distributions are bracket generating. 
\end{example}

In this article we will be interested in distributions which are $1$-step bracket generating, i.e, $TM=\calD^2$. If $\rk\calD=n$ and $\cork\calD=p$ then we say that $\calD$ is of type $(n,n+p)$. As observed, every contact distribution is of type $(2n,2n+1)$ for some $n\ge 1$.

\subsection{Fat Distributions}\label{sec:fatDistrbution}
We can associate to any distribution $\calD$, its curvature form $\Omega$ which is a $TM/\calD$-valued 2-form on $\calD$. The curvature form plays a very important role in subriemannian geometry.
\begin{defn} Given a distribution $\calD$, we have the quotient map $\lambda : TM\to TM/\calD$. The \textit{curvature form} of the distribution is a map $\Omega : \Lambda^2\calD \to TM/\calD$ defined as follows: 
	$$\Omega(X,Y) := -[X,Y]\mod \calD = -\lambda([X,Y]), \quad \text{for local sections $X,Y\in \calD$.}$$
\end{defn}

Observe that $\Omega$ is $C^\infty(M)$-linear. $\calD$ is $1$-step bracket-generating precisely when $\Omega$ is surjective. Now, there is a special class of bracket generating distributions, called fat. These can be defined in several equivalent ways as described below.
\begin{description}
	\item[Strong Bracket Generation ] A distribution $\calD$ on a manifold $M$ is called \emph{fat} (or \emph{strongly bracket generating}) at $x\in M$ if, for every nonzero vector $v\in\calD_x$, we have that $$T_x M = \calD_x + [V, \calD]_x,$$
	where $V$ is some (local) section of $\calD$ with $V_x=v$ and $[V,\calD]_x$ is the subspace of $T_xM$ defined as follows:
	$$[V,\calD]_x = \big\{[V,X]_x \big| X\in \calD\big\}$$
	The distribution is fat if it is fat at every point $x\in M$. 
	
	\item[Nondegeneracy of Curvature Form ] Suppose we identify $\big(TM/\calD\big)^*$ with the annihilator bundle $\calD^\perp\subset T^*M$ and define the \textit{dual curvature form} $\omega : \calD^{\perp}\to \Lambda^2\calD^*$ by $\omega(\alpha) = d\alpha|_\calD$. We say that $\calD$ is fat at $x\in M$ if and only if $\omega(\alpha)$ is a nondegenerate $2$-form on $\calD$, for each $0\ne \alpha\in\calD^{\perp}_x$. The distribution $\calD$ is fat if it is fat at every $x\in M$
\end{description}

\begin{remark}\label{rmk:fatAndSymplectic}
	Clearly, any fat distribution is $1$-step bracket generating. Another important consequence of fatness is that $d\alpha|_\calD$ is nondegenerate for \emph{any} non-vanishing, (local) 1-form $\alpha$ annihilating $\calD$. Indeed, for a corank $p$ distribution, the strongly bracket generating property implies (and is implied by) that for any $0\ne v\in\calD_x$, the map
	\begin{align*}
		\Phi_v:\calD_x &\to T_xM/{\calD_x}\cong \bbR^p\\
		u &\mapsto \big(\iota_v \omega_1(u),\ldots,\iota_v\omega_p(u)\big)
	\end{align*}
	is surjective where $\calD\underset{loc.}{=}\cap_{i=1}^p\ker\lambda^i$ and $\omega_i = d\lambda^i|_\calD$. This, in particular, implies that $\omega_i$ are non-degenerate for each $i=1,\ldots,p$. 
\end{remark}

\begin{example}
	A contact distribution is fat. In fact, contact distributions are precisely the fat distributions in the corank 1 case.
\end{example}

Since fatness is an open condition, fat distribution germs form an open set in the space of all germs of distributions of a fixed rank and corank. In general, fatness imposes strong numerical constraints on the rank and corank of the distribution.
\begin{theorem}[\cite{raynerFat,montTour}]\label{thm:fatNumericalConstraints}
	Suppose $\calD$ is a rank $k$-distribution on $M$ with $\dim M=n$. If $\calD$ is fat then the following numeric constraints hold.
	\begin{itemize}
		\item $k$ is divisible by $2$ and if $k < n-1$, then $k$ is divisible by $4$
		\item $k \ge (n-k)+1$
		\item The sphere $S^{k-1}$ admits $n-k$ linearly independent vector fields
	\end{itemize}
	Conversely, given any pair $(k,n)$ satisfying the above, there is a germ of fat distribution of type $(k,n)$.
\end{theorem}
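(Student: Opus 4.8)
The plan is to treat the two implications separately. Necessity will be a pointwise statement about the pencil of curvature $2$-forms of $\calD$; the converse will be an explicit ``generalised Heisenberg'' model. For necessity, I would fix $x\in M$, an inner product $g$ on the fibre $\calD_x\cong\bbR^k$, and a basis $\alpha_1,\dots,\alpha_{n-k}$ of $\calD^\perp_x$, and set $\omega_i=d\alpha_i|_\calD$, $\omega_i(u,w)=g(\psi_i u,w)$ with each $\psi_i$ skew-$g$-adjoint. By the dual-curvature description of fatness in \autoref{sec:fatDistrbution} together with Remark~\ref{rmk:fatAndSymplectic}, the operator $\sum_i a_i\psi_i$ is invertible for every $(a_i)\ne 0$, and the map $\Phi_v$ is onto $\bbR^{n-k}$ for each $0\ne v\in\calD_x$.

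From this the three conditions should fall out quickly. First, a nondegenerate alternating form on $\calD_x$ forces $k$ even. Second, for $0\ne v$ the linear map $\alpha\mapsto\iota_v\omega(\alpha)$ is injective $\calD^\perp_x\to\calD^*_x$ and has image inside the annihilator of the line $\bbR v$ (as $\omega(\alpha)(v,v)=0$), a subspace of dimension $k-1$; hence $n-k\le k-1$. Third, surjectivity of $\Phi_v$ is equivalent to linear independence of $\psi_1 v,\dots,\psi_{n-k} v$, and each $\psi_i v$ is $g$-orthogonal to $v$, so restricting the linear maps $\psi_i$ to $S^{k-1}$ produces $n-k$ pointwise independent, smooth vector fields on $S^{k-1}$. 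For the refinement $4\mid k$ when $k<n-1$, i.e.\ $n-k\ge 2$, I would look at the pencil $a\psi_1+b\psi_2$, which is invertible for all $(a,b)\ne(0,0)$, and at its Pfaffian $p(a,b)=\mathrm{Pf}(a\psi_1+b\psi_2)$: this is a real binary form of degree $k/2$, not identically zero since $p(1,0)=\mathrm{Pf}(\psi_1)\ne 0$, and $\det=p^2$ shows $p$ has no zero on $\bbR^2\setminus\{0\}$; a real binary form with no nontrivial real zero has even degree, so $4\mid k$. This avoids any deep topology.

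For the converse, set $m=n-k$. The hypothesis that $S^{k-1}$ admits $m$ linearly independent vector fields, combined with the solution of the vector fields on spheres problem (Radon, Hurwitz, Eckmann; Adams), gives $m\le\rho(k)-1$, where $\rho$ is the Radon--Hurwitz number; hence there exist skew-symmetric $k\times k$ matrices $J_1,\dots,J_m$ with $J_i^2=-I$ and $J_iJ_j=-J_jJ_i$ for $i\ne j$, so that $\big(\sum_i a_iJ_i\big)^2=-\big(\sum_i a_i^2\big) I$ and $\sum_i a_iJ_i$ is invertible for $a\ne 0$. On $M=\bbR^k\times\bbR^m$ with coordinates $(x,z)=(x^1,\dots,x^k,z^1,\dots,z^m)$ I would set
\[
\alpha_i=dz^i-\tfrac12\textstyle\sum_{p,q}(J_i)_{pq}\,x^p\,dx^q,\qquad i=1,\dots,m,
\]
and $\calD=\bigcap_i\ker\alpha_i$. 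The $\alpha_i$ are everywhere independent, so $\calD$ has type $(k,n)$; at each point $\calD$ projects isomorphically onto the $x$-directions, and under this identification $d\alpha_i|_\calD$ is the constant alternating form $-\langle J_i\,\cdot\,,\,\cdot\,\rangle$. Thus for any $0\ne\alpha=\sum_i c_i\alpha_i\in\calD^\perp$ the form $d\alpha|_\calD$ corresponds to the invertible matrix $\sum_i c_iJ_i$ and is nondegenerate, so $\calD$ is fat and its germ at $0$ is the one we want.

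I expect the main obstacle to be the converse direction: one must invoke the \emph{sharp} form of the vector fields on spheres theorem to obtain the Clifford module realising $m$ independent vector fields, and then verify that the model distribution above is fat at \emph{every} point of $M$, not merely at the origin --- that is, that $d\alpha_i|_\calD$ is genuinely the constant form attached to $J_i$ throughout. The necessity direction is comparatively soft; its only non-routine ingredient is the Pfaffian parity argument for $4\mid k$.
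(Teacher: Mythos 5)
The paper does not prove this theorem; it is quoted from the cited references (Rayner, Montgomery), so there is no in-text argument to compare against. Your proof is correct and is essentially the standard one from those sources: the injectivity of $\alpha\mapsto\iota_v\omega(\alpha)$ into the annihilator of $v$ for the bound $n-k\le k-1$, the maps $v\mapsto\psi_i v$ for the vector fields on $S^{k-1}$, the Pfaffian parity argument for $4\mid k$, and for the converse the Adams bound $n-k\le\rho(k)-1$ feeding the Radon--Hurwitz anticommuting complex structures into the generalised Heisenberg model $\alpha_i=dz^i-\tfrac12\sum_{p,q}(J_i)_{pq}x^p\,dx^q$, whose curvature forms are constant in $x$ and hence nondegenerate everywhere. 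All steps check out, including the duality between surjectivity of $\Phi_v$ and nondegeneracy of every $\omega(\alpha)$, which the paper records in Remark~\ref{rmk:fatAndSymplectic}.
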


\begin{remark}\label{rmk:fatNonGeneric}
	Though contact distributions happen to be generic in the corank $1$ situation, fatness in higher corank is not a generic property. For corank $\ge 3$, this follows easily from the Determinacy theorem (\cite[pg. 65]{montTour}). As for corank $2$, we can define the signature of a (germ of) distribution of type $(4,6)$ (\cite[pg. 92]{montTour}). We have two disjoint \emph{open} classes of distributions germs of type $(4,6)$ : the \emph{elliptic} and the \emph{hyperbolic} type (\cite{zanetGenericRank4}). The elliptic type corresponds to fat distribution germs, whereas a typical example of a hyperbolic type is given by the product of two contact structures of type $(2,3)$ as considered in \cite{dAmbraSubbundle,bandeContactPair}.
\end{remark}

It follows that the fat distributions are of the type $(4n, 4n + p)$ when $p>1$. We will now focus on $p=2$.

\subsection{Corank $2$ Fat Distributions}
Suppose $\calD$ is a corank $2$ fat distribution on $M$. For simplicity, let us first assume that $\calD$ is cotrivial. Hence there exist $1$-forms $\lambda^1,\lambda^2\in\Omega^1(M)$ such that the quotient map $\lambda:TM\to TM/\calD \cong M\times \bbR^2$ is given as, $\lambda=(\lambda^1,\lambda^2)$ and $$\calD=\ker\lambda^1\cap\ker\lambda^2.$$ Moreover, the curvature $2$-form $\Omega:\Lambda^2\calD\to TM/\calD\cong M\times\bbR^2$ is given by 
$$\Omega(X,Y) =\big(\omega_1(X,Y), \omega_2(X,Y)\big),$$
where $\omega_i=d\lambda^i|_\calD$. Since $\calD$ is fat, we have from \autoref{rmk:fatAndSymplectic} that $\omega_1, \omega_2$ are nondegenerate. Therefore, we can define an automorphism $A:\calD\to \calD$ by the following rule :
$$\omega_1(u, Av) = \omega_2(u,v), \forall u,v\in\calD.$$
Explicitly, we have, $$A=-I_{\omega_1}^{-1}\circ I_{\omega_2},$$
where $I_{\omega_i} : \calD\to\calD^*$ is the induced isomorphism $I_{\omega_i}(v) = \iota_v \omega_i$.

For any subspace $V\subset \calD_x$, denote the symplectic complement of $V$ with respect to $\omega_i$ by $V^{\perp_i}$, $i=1,2$.
$$V^{\perp_i} = \big\{w\in \calD_x| d\lambda^i(v,w) = 0,\forall v\in V\big\}.$$
It is easy to deduce that
$$V^{\perp_2} = \big(AV\big)^{\perp_1}, \quad V^{\perp_1} = A\big(V^{\perp_2}\big).$$

For a general $\cork 2$ fat distribution $\calD$, not necessarily cotrivializable, the automorphism $A$ can only be defined locally, since it depends on the choice of annihilating forms for $\calD$. We observe a criteria for fatness, in the corank $2$ situation.
\begin{prop}\label{prop:fatEigenValue}
	Suppose $\calD$ is a corank $2$ distribution on $M$ defined locally by a pair of 1-forms $\lambda^1,\lambda^2$. Then $\calD$ is fat if and only if the following conditions are satisfied:
	\begin{itemize}
		\item $\omega_i = d\lambda^i|_\calD$ is nondegenerate for $i=1,2$.
		\item The (local) automorphism $A:\calD\to \calD$ relating $\omega_1,\omega_2$ has no real eigenvalue.
	\end{itemize}
	
\end{prop}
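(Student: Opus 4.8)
The plan is to rephrase strong bracket generation as the nondegeneracy of an entire \emph{pencil} of $2$-forms on $\calD$, and then to read off the spectral condition on $A$.

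\emph{Step 1 (unwinding fatness pointwise).} Fix $x\in M$ together with the local defining forms $\alpha_1,\alpha_2$. By \autoref{rmk:fatAndSymplectic}, $\calD$ is fat at $x$ if and only if for every $0\ne v\in\calD_x$ the linear map $\Phi_v:\calD_x\to T_xM/\calD_x\cong\bbR^2$, $u\mapsto\big(\omega_1(v,u),\omega_2(v,u)\big)$, is surjective. A linear map into $\bbR^2$ fails to be onto exactly when its image lies in a line, i.e. when some $(c_1,c_2)\in\bbR^2\setminus\{0\}$ annihilates the image; for $\Phi_v$ this says $c_1\iota_v\omega_1+c_2\iota_v\omega_2=0$ on $\calD_x$, that is, $v\in\ker\big(c_1\omega_1+c_2\omega_2\big)$. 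Hence $\calD$ is fat at $x$ precisely when the $2$-form $\omega_{(c_1,c_2)}:=c_1\omega_1+c_2\omega_2$ is nondegenerate on $\calD_x$ for every $(c_1,c_2)\ne 0$.

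\emph{Step 2 (from the pencil to $A$).} Nondegeneracy of the member $(c_1,c_2)=(1,0)$ is exactly nondegeneracy of $\omega_1$; assuming this, $I_{\omega_1}:\calD\to\calD^*$ is an isomorphism, $A=-I_{\omega_1}^{-1}\circ I_{\omega_2}$ is defined, and $I_{\omega_2}=-I_{\omega_1}\circ A$. Therefore
\[
I_{\omega_{(c_1,c_2)}}=c_1I_{\omega_1}+c_2I_{\omega_2}=I_{\omega_1}\circ\big(c_1\,\mathrm{id}_\calD-c_2A\big),
\]
so $\omega_{(c_1,c_2)}$ is nondegenerate if and only if $c_1\,\mathrm{id}_\calD-c_2A$ is invertible. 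For $c_2=0$ this is automatic, and for $c_2\ne 0$ it holds iff $c_1/c_2$ is not an eigenvalue of $A$. Consequently the full pencil is nondegenerate iff $\omega_1$ is nondegenerate and $A$ has no real eigenvalue; the member $(c_1,c_2)=(0,1)$ shows in addition that $A$ is invertible $\Leftrightarrow$ $\omega_2$ is nondegenerate, which is why the statement records both conditions. Combined with Step 1, this yields both implications of the proposition.

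\emph{On the difficulty.} There is no genuine obstacle here: the whole argument is linear algebra carried out fibrewise on $\calD_x$, and since fatness is a local condition the choice of defining forms is harmless — a different choice produces the same $2$-dimensional family of $2$-forms and hence does not affect whether the pencil is nondegenerate. The only two points to watch are the duality in Step 1, namely that a linear map $\calD_x\to\bbR^2$ is non-surjective exactly when its dual map has a nonzero kernel (this is what turns ``$\Phi_v$ is not onto'' into ``$v$ lies in the kernel of some form in the pencil''), and, in Step 2, keeping the musical isomorphisms $I_{\omega_i}$ straight; the precise sign in $A=-I_{\omega_1}^{-1}\circ I_{\omega_2}$ is immaterial, since one ranges over all $(c_1,c_2)$.
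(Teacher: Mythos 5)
Your proposal is correct, and it takes a route that is genuinely different from --- in fact dual to --- the one in the paper. Both arguments begin with the same reformulation from \autoref{rmk:fatAndSymplectic}: fatness at $x$ means that $\Phi_v:\calD_x\to\bbR^2$ is onto for every $0\ne v\in\calD_x$. The paper then studies the \emph{kernel} of $\Phi_v$, namely $v^{\perp_1}\cap v^{\perp_2}$, and uses the identity $v^{\perp_2}=(Av)^{\perp_1}$ to see that $\Phi_v$ is onto exactly when $\langle v,Av\rangle^{\perp_1}$ has codimension $2$, i.e.\ when $v$ and $Av$ are independent, i.e.\ when $v$ is not an eigenvector of $A$ for a real eigenvalue. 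You instead study the \emph{cokernel}: $\Phi_v$ fails to be onto precisely when some $(c_1,c_2)\ne 0$ annihilates its image, which converts fatness into nondegeneracy of every nonzero member of the pencil $c_1\omega_1+c_2\omega_2$ (thereby re-deriving, for corank $2$, the paper's second definition of fatness via the dual curvature form on $\calD^\perp_x$); the factorization $I_{c_1\omega_1+c_2\omega_2}=I_{\omega_1}\circ\big(c_1\,\mathrm{id}_{\calD}-c_2A\big)$ then identifies the degenerate members of the pencil with the real points of the spectrum of $A$. Both proofs are short fibrewise linear algebra; yours makes the correspondence between eigenvalues of $A$ and degenerate curvature forms explicit and shows cleanly why nondegeneracy of $\omega_2$ is forced (it is the member $(0,1)$, corresponding to invertibility of $A$), while the paper's version is marginally more economical because the identity $V^{\perp_2}=(AV)^{\perp_1}$ had already been recorded. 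Your ordering of quantifiers in Step 1 and the remark that the sign convention in $A=-I_{\omega_1}^{-1}\circ I_{\omega_2}$ is immaterial are both sound, so I see no gap.
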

\begin{proof}
	First suppose that $\calD$ is strongly bracket generating at $x$. This means that for any $0\ne v\in\calD_x$, the map
	\begin{align*}
		\Phi_v:\calD_x &\to T_xM/{\calD_x}\cong \bbR^2\\
		u &\mapsto \big(\iota_v \omega_1(u),\iota_v\omega_2(u)\big)
	\end{align*}
	is surjective. This, in particular, implies that $\omega_i$ are non-degenerate. On the other hand, if $\omega_1$ and $\omega_2$ are non-degenerate then  
	\begin{center}$\Phi_v$ is onto $\Leftrightarrow$ $\cork(v^{\perp_1}\cap v^{\perp_2})=2$\end{center}
	Since $v^{\perp_2}=(Av)^{\perp_1}$, this is equivalent to $A$ having no real eigenvalue. This completes the proof.\end{proof}

\begin{example}\label{exmp:contactHolomorphic}
	A holomorphic $1$-form $\Theta$ on a complex manifold $M$ with $\dim_\bbC M = 2n+1$, is called \emph{holomorphic contact} if it satisfies $\Theta \wedge d\Theta^n \ne 0$. By the holomorphic Darboux theorem (\cite{forstnericHoloLegendrianCurves}), we have holomorphic coordinates $(z,x_1,\ldots,x_n,y_1,\ldots,y_n)$ on $M$ such that holomorphic contact form is given as, $\Theta \underset{loc.}{=} dz - \sum_{j=1}^n y_j dx_j$. 
	If we identify, $\bbC^{2n+1}$ with $\bbR^{4n+2}$ and write $z=z_1 + \iota z_2,\, x_j = x_{j1} + \iota x_{j2}, \, y_j = y_{j1} + \iota y_{j2}$, then $\Theta$ can be expressed as $\Theta = \lambda^1 + \iota\lambda^2$, where 
	$$\lambda^1 = dz_1 - \sum_{j=1}^n \big(y_{j1} dx_{j1} - y_{j2}dx_{j2}\big),\qquad \lambda^2 = dz_2 - \sum_{j=1}^n \big(y_{j2} dx_{j1} + y_{j1}dx_{j2}\big).$$
	This gives us a corank $2$ distribution $\calD = \ker \lambda^1 \cap \ker\lambda^2\subset TM$, which is canonically isomorphic to the holomorphic contact subbundle $\ker \Theta \subset T^{(1,0)}M$. We can explicitly write down a local frame $\calD=\Span \langle X_{j1}, X_{j2}, Y_{j1}, Y_{j2}\rangle$, where
	$$X_{j1} = \partial_{x_{j1}} + y_{j1}\partial_{z_1} + y_{j2}\partial_{z_2}, \quad X_{j2} = \partial_{x_{j2}} - y_{j2}\partial_{z_1} + y_{j1}\partial_{z_2}, \quad Y_{j1} = \partial_{y_{j1}},\quad Y_{j2} = \partial_{y_{j2}}.$$
	Then the connecting automorphism $A:\calD\to\calD$ defined by $d\lambda^1(u,Av) = d\lambda^2(u,v)$ for each  $u,v\in\calD$, satisfies the following equations :
	$$AX_{j1} = -X_{j2}, \quad AX_{j2} = X_{j1},\qquad AY_{j1} = -Y_{j2}, \quad AY_{j2} = Y_{j1}.$$
	In particular, we have $A^2 = -Id$, that is, $A$ induces a complex structure on $\calD$. Since $A$ has no real eigenvalue, by \autoref{prop:fatEigenValue} the distribution $\calD$ is fat.
\end{example}

Furthermore, the tangent bundle of $M$ splits as the direct sum $TM=\calD\oplus \Span \langle Z_1,Z_2\rangle$, where $Z_i = \partial_{z_i}$ are two vector fields satisfying the relations below : 
$$[Z_1,Z_2] = 0, \quad \lambda^i(Z_j) = \delta_{ij}, \quad \iota_{Z_i}d\lambda^j|_\calD = 0, \qquad i,j=1,2.$$
Motivated by this, we consider the following.

\begin{defn}\label{defn:fatWithReeb}
	A corank $2$ distribution $\calD$ on $M$ is said to admit (local) Reeb directions $Z_1,Z_2$, if $\calD\underset{loc.}{=} \ker\lambda^1\cap\ker\lambda^2$ and $TM=\calD \oplus \Span \langle Z_1,Z_2\rangle$ such that,
	\begin{enumerate}[\qquad (a)]
		\item \label{defn:fatWithReeb:a} $\lambda^1(Z_1) = 1, \; \lambda^1(Z_2) = 0$
		\item \label{defn:fatWithReeb:b} $\lambda^2(Z_1) = 0, \; \lambda^2(Z_2) = 1$
		\item \label{defn:fatWithReeb:c} $\iota_{Z_i} d\lambda^j|_\calD = 0$ for $i,j=1,2$
		\item \label{defn:fatWithReeb:d} $[Z_1,Z_2]=0$
	\end{enumerate}
\end{defn}

As observed in \autoref{exmp:contactHolomorphic}, the real distribution associated to the holomorphic contact structure, admits (local) Reeb directions.

Given any corank $2$ fat distribution $\calD$ on a manifold $M$ of dimension $4n+2$, we may find (\cite{geCharacteristicClass}) a coordinate system $(x_1,\ldots,x_{4n},z_1,z_2)$ and $1$-forms $$\lambda^i = dz_i - \sum_{j,k} \Gamma^i_{jk} x_j dx_k + R_i,\quad i=1,2,$$
such that $\calD\underset{loc.}{=}\ker\lambda^1 \cap \ker\lambda^2$. Here $R_i=\sum_{j=1}^2 f_{ij} dz_j + \sum_{j=1}^{4n} g_{ij}dx_j$ is a $1$-form such that, $f_{ij}, g_{ij}\in O(|x|^2 + |z|^2)$, and $\{\Gamma^i_{jk}\}$ constitute the structure constants of some nilpotent Lie algebra, known as the nilpotentization (\cite{montTour, tanakaDiffSystem}), associated to the distribution $\calD$. In particular $\Gamma^i_{jk} = -\Gamma^i_{kj}$. Observe that, if we take $f_{ij}=0$ and $g_{ij}$ to be functions of $x_k$'s only, then any such tuple of forms $(\lambda^1,\lambda^2)$ above gives a corank $2$ distribution, which admits local Reeb directions $ (\partial_{z_1},\partial_{z_2})$.

From the classification results of \cite{nilpoten6DimLieAlgebra}, we see that the only possible Lie algebra that can arise as the nilpotentization of a corank $2$ fat distribution on a $6$ dimensional manifold is the complex Heisenberg Lie algebra.

\begin{question}\label{question:uniqueFat}
	Is every (germ of) corank $2$ fat distribution on $\bbR^6$, which admits local Reeb directions, diffeomorphic to the germ of the distribution associated to a holomorphic contact structure as in \autoref{exmp:contactHolomorphic}?
\end{question}

For a general corank 2 fat distribution, the answer is clearly no. From a result of Montgomery (\cite{montGeneric}), it follows that a \emph{generic} distribution germ of type $(4,6)$ cannot admit a local frame generating a finite dimensional Lie algebra. This differs from a holomorphic contact  distribution, which does admit a local frame generating the complex Heisenberg Lie algebra (see \autoref{exmp:contactHolomorphic}). Since the set of germs of fat distributions of type $(4,6)$ is open, there are plenty of fat distributions, non-diffeomorphic to the contact holomorphic one. But it is not clear whether any of these fat distributions admits (local) Reeb directions.

Note that if the answer to the above question is in the affirmative, we can characterize germs of horizontal immersions, given by the $1$-jet prolongation of holomorphic maps $\bbC \to \bbC$. But we suspect that the answer is negative as the $1$-forms obtained in \cite{geCharacteristicClass} indicates the presence of function moduli. Let us now study the question of horizontal immersions.

\section{Horizontal immersions in a corank 2 fat distribution}
\label{sec:inversion}
Suppose $\calD$ is a corank $2$ fat distribution on $M$ defined by a pair of $1$-forms $\lambda^1,\lambda^2$. Hence $\omega_i=d\lambda^i|_{\calD}$ are non-degenerate and the connecting homomorphism $A:\calD\to \calD$ defined by 
$$\omega_2(u,v)=\omega_1(u,A_x v), \quad \forall u, v\in\calD_x, \; x\in M$$
has no real eigenvalue. We further assume that \textit{the distribution $\calD$ admits local Reeb directions}.

Now for a fixed manifold $\Sigma$, consider the partial differential operator,
\begin{align*}
	\frD : C^\infty(\Sigma,M) &\to \Omega^1(\Sigma,\bbR^2)\\
	f &\mapsto \big(f^*\lambda^1, f^*\lambda^2\big)
\end{align*}
The $C^\infty$-solutions of $\frD(f)=0$ are precisely the $\calD$-horizontal maps since the derivative of $f$ maps $T\Sigma$ into $\calD$. Furthermore, horizontality implies that $f^*d\lambda^1=0=f^*d\lambda^2$; hence $df_x:T_x\Sigma\to \calD_{f(x)}$ is an \emph{isotropic} map with respect to both the forms $\omega_1$ and $\omega_2$ on $\calD$ for every $x\in \Sigma$. Now, linearizing $\frD$ at an $f\in C^\infty(\Sigma,M)$ we get the linear differential operator $\frL_f$ as follows :
\begin{align*}
	\frL_f : \Gamma f^*TM &\to \Omega^1(\Sigma,\bbR^2)\\
	\partial &\mapsto \Big(d\big(\lambda^i\circ\partial\big) + f^*\iota_\partial d\lambda^i\Big)_{i=1,2}
\end{align*}
Restricting $\frL_f$ to $\Gamma f^*\calD$ we have,
\begin{align*}
	\calL_f : \Gamma f^*\calD &\to \Omega^1(\Sigma,\bbR^2)\\
	\partial &\mapsto \big(f^*\iota_\partial d\lambda^i\big)_{i=1,2}
\end{align*}
Observe that $\calL_f$ is a $C^\infty(\Sigma)$-linear map and hence is induced by a bundle map $f^*\calD\to T^*\Sigma\otimes\bbR^2$.

An horizontal immersion $f$ is said to be \emph{regular} if this bundle map $\calL_f$ is surjective; this is referred to as $\Omega$-regularity in \cite{gromovCCMetric}. One then gets that the operator $\frD$ is infinitesimally invertible over the set of regular maps and an appeal to Gromov's general theorems (\cite{gromovBook}). But for such a map $f$ to exist, i.e, for the existence of a regular, common isotropic subspace $V\subset \calD$, we must have the inequality (\cite{gromovCCMetric})
$$\rk\calD - \dim\Sigma  \ge 2\dim\Sigma,$$
as common isotropic subspaces are necessarily in the kernel of the map $\calL_f$. We will be focusing on $\calD$-horizontal immersions of discs $\bbD^2$ in $6$-dimensional manifold, where $\rk\calD=4$. Clearly, $\rk\calD = 4 \not\ge 6 = 3.2=3\dim\Sigma$. Hence there is no possibility of an \emph{regular} horizontal map $\bbD^2\to M$ to exist and so Gromov's method does not apply directly.

\subsection{Inversion of $\frL_f$ at $\calD$-horizontal Immersions}
We now denote $\Sigma=\bbD^2$ and $M=\bbR^6$. Suppose $\calD=\ker\lambda^1\cap\ker\lambda^2$ is a given corank $2$ fat distribution, which admits local Reeb directions. Since $\Sigma$ is a compact manifold with boundary, we have (see \autoref{exmp:frechetSpaces}),
\begin{obs}
	The spaces $\Gamma(f^*TM)$ and $\Omega^1(\Sigma,\bbR^2)$ are tame Fr\'echet spaces.
\end{obs}
As before we have the linearization map, $\frL_f:\Gamma f^*\calD\to \Omega^1(\Sigma,\bbR^2)$. Since $\frL_f$ is a linear partial differential operator of order $1$, we have (see \autoref{exmp:tameOperators}),
\begin{obs}\label{obs:linearizationTame}
	$\frL_f$ is a tame linear map of order $1$.
\end{obs}

This sets the problem into the framework of differential operators between Fr\'echet spaces for studying the existence of local inversion. We refer to the appendix (\autoref{sec:hamiltonIFT}) for relevant details. 
We first prove the following result.
\begin{prop}\label{prop:horizontalInversion}
	If $f$ is a smooth horizontal immersion, then $\frL_f$ admits a tame inverse $\frM_f$. 
\end{prop}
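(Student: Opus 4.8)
The plan is to exploit the fact that $f$ is a horizontal \emph{immersion}, so that $df_x : T_x\Sigma \to \calD_{f(x)}$ is an injective isotropic map for both $\omega_1$ and $\omega_2$, and to use the Reeb directions to split off the directions in $f^*TM$ along which $\frL_f$ is purely algebraic. First I would decompose $f^*TM = f^*\calD \oplus \langle f^*Z_1, f^*Z_2\rangle$ using the Reeb splitting $TM = \calD \oplus \langle Z_1, Z_2\rangle$. For a section $\partial$ of $f^*TM$, write $\partial = \partial^{\calD} + h_1 f^*Z_1 + h_2 f^*Z_2$ with $h_i \in C^\infty(\Sigma)$. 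Since $\alpha_i(Z_j) = \delta_{ij}$ and $\iota_{Z_i} d\alpha_j|_\calD = 0$, the formula for $\frL_f$ collapses: the term $d(\alpha_i \circ \partial)$ contributes $dh_i$ plus a term from $\partial^\calD$, while $f^*\iota_\partial d\alpha_i$ sees only $\partial^\calD$ (the Reeb part is killed by $\iota_{Z_j} d\alpha_i|_{\calD} = 0$, and $df$ lands in $\calD$ so the pairing is with $\calD$-vectors). Thus schematically $\frL_f(\partial) = \big(dh_i + \calL_f(\partial^\calD) + (\text{zeroth order in }\partial^\calD)\big)_{i=1,2}$.

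Next I would show $\calL_f$, restricted to an appropriate complement of the image-related subbundle, is still not surjective as a bundle map (rank $\calD = 4 < 6$), so the inversion must genuinely use the differential $dh_i$ terms — this is the point flagged in the introduction that $\frM_f$ is not a differential operator. The key step is: given a target $(\beta_1, \beta_2) \in \Omega^1(\Sigma,\bbR^2)$, solve $\frL_f(\partial) = (\beta_1,\beta_2)$ by first choosing $\partial^\calD$ to be a section of the rank-2 subbundle $\ker(df)^{\perp}$-type complement so that $\calL_f(\partial^\calD)$ absorbs the part of $(\beta_i)$ not of the form $dh_i$; since $\Sigma$ is a $2$-disc, a $1$-form $\beta_i$ decomposes (Hodge/de Rham on a disc, using that $H^1(\bbD^2)=0$) as $dh_i + \delta\gamma_i$ or as $\calL_f(\cdot)$ hitting the complementary $2$-dimensional fibre. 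Concretely, because $\calL_f$ is injective on sections that are $df$-orthogonal (the immersion is isotropic of maximal dimension for the symplectic forms, and fatness forces $\calL_f$ to be injective there — a rank count: $\calL_f$ kills exactly the isotropic image $df(T\Sigma)$ by the isotropy hypothesis, leaving a $2$-dimensional fibrewise complement mapping injectively into the $4$-dimensional fibre $T^*\Sigma\otimes\bbR^2$), we can invert it algebraically on its image and then solve the remaining $dh_i = (\text{error }1\text{-form})$, which is always solvable on the disc by integration — here ellipticity of $d$ (or rather $d+\delta$) on the disc enters, giving the factorization "elliptic operator $\circ$ bundle map" mentioned after Theorem 1.1.

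The main obstacle I expect is the \emph{compatibility and tameness} bookkeeping: one must verify that the fibrewise complement on which $\calL_f$ is injective varies smoothly with $x$ (this uses that $f$ is an immersion, so $df(T\Sigma)$ has constant rank $2$, hence $\calL_f$ has constant rank), that the resulting splitting of $\Omega^1(\Sigma,\bbR^2)$ into "$\calL_f$-image part" and "exact part" is a tame splitting, and that solving $dh_i = \eta_i$ on $\bbD^2$ together with inverting the elliptic operator gives \emph{tame} estimates (seminorm $k$ controlled by seminorm $k+1$, say) — this is where one invokes the standard tame estimates for elliptic operators / the Poincaré lemma with estimates on the disc, as set up in the Appendix (\autoref{eg:pdeIsTame}, \autoref{eg:sectionSpaceIsTame}). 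Once each piece — the algebraic inverse of $\calL_f$ on its image, the tame right inverse of $d$ on $1$-forms over the disc, and the projections of the splitting — is shown tame, the composite $\frM_f$ is tame as a composition of tame maps, and $\frL_f \circ \frM_f = \mathrm{Id}$ follows from the construction. I would also double-check the boundary behaviour on $\partial \bbD^2$, since $\Sigma$ has boundary and the de Rham/elliptic inversion needs either no boundary condition (Poincaré lemma works fine) or a benign one.
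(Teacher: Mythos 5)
Your setup---the Reeb splitting $\partial=\partial_0+aZ_1+bZ_2$, the resulting schematic form $\frL_f(\partial)=\big(da+\calL_f(\partial_0)_1,\,db+\calL_f(\partial_0)_2\big)$, and the observation that the pointwise kernel of $\calL_f$ is exactly the $2$-plane $\im df$ (so that $\calL_f$ has only a $2$-dimensional image inside the $4$-dimensional fibre of $T^*\Sigma\otimes\bbR^2$)---agrees with the paper; note though that identifying $\ker\calL_f$ with $\im df$ already needs the fatness argument of Lemma~\ref{lemma:invariant}. The gap is in your key step. You propose to split $(\beta_1,\beta_2)$ into a part lying in $\im\calL_f$ and a part of the form $(dh_1,dh_2)$, invert $\calL_f$ algebraically on the first part, and then ``integrate'' $dh_i=\eta_i$. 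But this is not an algebraic, fibrewise splitting: the pairs of exact forms do not form a subbundle complementary to $\im\calL_f$, and the Hodge coexact part of $\beta_i$ has no reason to lie in $\im\calL_f$ pointwise. The actual content of the requirement ``$(\beta_1-da,\beta_2-db)$ is a section of $\im\calL_f$'' is a \emph{coupled system of two first-order PDEs in the two unknowns $(a,b)$}, since $\im\calL_f$ has codimension $2$ in $T^*\Sigma\otimes\bbR^2$. The whole point of the proposition is that this system is elliptic, and that is not the ellipticity of $d+\delta$ on the disc: it comes from the fact that the connecting automorphism $A$ (with $\omega_2(u,v)=\omega_1(u,Av)$) has no real eigenvalue and preserves $\im df$ (Lemma~\ref{lemma:invariant}), so that writing $A|_{\im df}=\left(\begin{smallmatrix}p&q\\ r&s\end{smallmatrix}\right)$ gives $(p-s)^2+4qr<0$, which is precisely the ellipticity of the system \eqref{eqn:horizontalAfterElimination}. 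Your write-up never identifies this operator or where fatness enters its symbol, so the analytic heart of the proof is missing.

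For comparison, the paper's route is: (i) choose $J$ compatible with $\omega_1$ and impose the auxiliary normalization $d\alpha_1(\partial_0,JX)=0=d\alpha_1(\partial_0,JY)$ to kill the $2$-dimensional kernel of $\calL_f$; (ii) rewrite the $\alpha_2$-equations via $d\alpha_2(\partial_0,\cdot)=d\alpha_1(\partial_0,A\cdot)$ and eliminate $\partial_0$, obtaining the first-order elliptic system \eqref{eqn:horizontalAfterElimination} for $(a,b)$, solved as a Dirichlet problem with prescribed boundary data $(a_0,b_0)$; (iii) recover $\partial_0$ from the nondegenerate linear system \eqref{eqn:horizontalLinear}. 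Tameness then follows, as you say, from tameness of inverses of elliptic boundary-value problems and of compositions; but the inverse genuinely depends on the choices of $J$ and of the Dirichlet data, not merely on a ``benign'' boundary condition.
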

Note that we are assuming the existence of $\calD$-horizontal immersions in the above proposition. In fact, in the next section, we shall prove the inversion for an \emph{open} set of maps (\autoref{thm:openInversion}). Let us first prove the following.
\begin{lemma}\label{lemma:invariant}
	If $V\subset\calD_x$ is common isotropic with respect to $\omega_i=d\lambda^i|_\calD$ and $\dim V = 2$, then $V=AV$.
\end{lemma}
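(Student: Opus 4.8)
**Proof proposal for Lemma (if $V\subset\calD_x$ is common isotropic with respect to $\omega_i=d\alpha_i|_\calD$ and $\dim V = 2$, then $V = AV$).**

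The plan is to argue by dimension counting using the symplectic complements with respect to $\omega_1$ and $\omega_2$, together with the identity $V^{\perp_2} = (AV)^{\perp_1}$ established earlier in the excerpt. Since $\calD_x$ is $4$-dimensional and $\omega_1$ is nondegenerate on $\calD_x$, for any $2$-dimensional subspace $W\subset\calD_x$ we have $\dim W^{\perp_1} = 2$. The isotropy hypothesis says $V\subset V^{\perp_1}$ and $V\subset V^{\perp_2}$; comparing dimensions forces $V = V^{\perp_1}$ (i.e.\ $V$ is Lagrangian for $\omega_1$) and likewise $V = V^{\perp_2}$ (Lagrangian for $\omega_2$).

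Next I would feed these two equalities into the relation $V^{\perp_2} = (AV)^{\perp_1}$. From $V = V^{\perp_2}$ we get $V = (AV)^{\perp_1}$. Since $A$ is an automorphism of $\calD_x$, $AV$ is again $2$-dimensional, so $(AV)^{\perp_1}$ is $2$-dimensional and taking $\perp_1$ of both sides is an involution on $2$-planes (because $\omega_1$ is nondegenerate and $W^{\perp_1\perp_1} = W$ for $W$ of half dimension). Applying $\perp_1$ to $V = (AV)^{\perp_1}$ yields $V^{\perp_1} = AV$. But $V = V^{\perp_1}$ from the first paragraph, hence $V = AV$, as desired.

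I expect the only subtlety — and thus the step to state carefully rather than the genuine obstacle — is justifying that a $2$-dimensional subspace that is isotropic for a nondegenerate $2$-form on a $4$-dimensional space is automatically Lagrangian; this is the standard fact $\dim W + \dim W^{\perp} = \dim\calD_x$ for nondegenerate $\omega_1$, which gives $\dim W^{\perp_1} = 4 - 2 = 2 = \dim W$, and combined with $W\subseteq W^{\perp_1}$ forces equality. Everything else is a short chain of substitutions using the already-recorded identities $V^{\perp_2} = (AV)^{\perp_1}$ and $V^{\perp_1} = A(V^{\perp_2})$; no eigenvalue or fatness hypothesis on $A$ is actually needed for this particular lemma, only that $A$ is invertible (which follows from $A = -I_{\omega_1}^{-1}\circ I_{\omega_2}$ with both $\omega_i$ nondegenerate).
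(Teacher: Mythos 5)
Your proof is correct, but it takes a genuinely different route from the paper's. You first observe that a $2$-dimensional common isotropic subspace of the $4$-dimensional space $\calD_x$ is automatically Lagrangian for each nondegenerate form, $V=V^{\perp_1}=V^{\perp_2}$, and then conclude by a two-line substitution into the recorded identity $V^{\perp_2}=(AV)^{\perp_1}$ followed by the involutivity of $\perp_1$ on half-dimensional subspaces. The paper instead starts from $V\subset V^{\perp_1}\cap V^{\perp_2}=(V+AV)^{\perp_1}$ and invokes fatness: since $A$ has no real eigenvalue, $\langle v,Av\rangle$ is $2$-dimensional for $v\neq 0$, which bounds $\dim(V+AV)^{\perp_1}\le 2$ and forces $\dim(V+AV)=2$, i.e.\ $V=AV$. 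Your observation that the lemma needs only nondegeneracy of $\omega_1,\omega_2$ (hence invertibility of $A$) and not the no-real-eigenvalue condition is accurate, so your argument is slightly more general and arguably cleaner; what the paper's version buys is that the same eigenvalue computation ($\dim\langle v,Av\rangle=2$) is reused immediately afterwards in \autoref{prop:uniqueSolution}, so the author keeps fatness in the foreground. Both proofs hinge on the same identity $V^{\perp_2}=(AV)^{\perp_1}$, and both are sound.
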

\begin{proof}
	Since $V$ is common isotropic,	
	$$V\subset V^{\perp_1}\cap V^{\perp_2} = (V+AV)^{\perp_1} \;\Rightarrow\; \dim(V+AV)^{\perp} \ge \dim V = 2$$
	and so, $\dim (V+AV) \le \dim \calD_x - 2 = 2$. On the other hand, $\dim(V+AV) \ge \dim V = 2$. Hence, $\dim(V+AV) = 2 = \dim V$, which is only possible if $V = AV$.
\end{proof}

\begin{prop}\label{prop:uniqueSolution}
	If $f$ is a smooth $\calD$-horizontal immersion, given any $(P,Q)\in\Omega^1(\Sigma,\bbR^2)$, the equation $\frL_f(\partial) = (P,Q)$ admits a unique solution $\partial = \frM_f(P,Q)$, subject to a boundary condition. The process of obtaining the solution depends on a choice of complex structure $J$ on $\calD$.
\end{prop}
\begin{proof}
	First, choose an almost complex structure $J$ on $\calD$, \emph{compatible} with $\omega_1 = d\lambda^1|_\calD$, i.e, the assignment $(u,v) \mapsto \omega_1(u, Jv)$ is a nondegenerate \emph{symmetric} form. Such a $J$ always exists (\cite[pg. 86]{daSilvaBook}). Clearly, $J \ne A$. 
	
	Since $f$ is $\calD$-horizontal we have, $$f^*\lambda^i = 0\Rightarrow f^*d\lambda^i = 0.$$
	Thus, for $\sigma\in\Sigma$, $\im df_\sigma$ is common isotropic with respect to both $\omega_i=d\lambda^i|_\calD$. In particular, $\im df_\sigma$ is $J$-totally real, since $J$ is $\omega_1$-compatible. Also since $f$ is an immersion, $\dim\im df_\sigma=2$. Then by \autoref{lemma:invariant}, we have that $$A(\im df_\sigma)=\im df_\sigma, \quad \text{for $\sigma\in\Sigma$.}$$
	Let us denote, $X=f_*(\partial_x), Y=f_*(\partial_y)$, where $\partial_x,\partial_y$ are the coordinate vector fields on $\Sigma=\bbD^2$. We thus have 
	$$\Span \langle AX, AY\rangle = \Span \langle X, Y\rangle.$$
	Hence, $A$ restricts to an automorphism on $\Span \langle X,Y\rangle$ : 
	$$A_0 = A|_{\Span \langle X, Y\rangle}.$$
	Let us write, \begin{equation*}
		AX = p X + q Y, \quad AY = r X + s Y \tag{$*$} \label{eqn:valueOfAXAY}
	\end{equation*}
	for some functions $p,q,r,s\in C^\infty(\Sigma)$. Then we have that $A_0=\begin{pmatrix}p & q \\ r & s\end{pmatrix}$ with respect to the basis $(X,Y)$. Since $A$ has no real eigenvalue, $A_0$ also has no real eigenvalue. This means that the characteristic polynomial $$\lambda^2 - (p+s)\lambda + (ps - qr)$$ of $A_0$ has negative discriminant, i.e., 
	$$(p+s)^2 - 4(ps - qr) = (p-s)^2 + 4qr < 0.$$
	
	Now let us consider the equation $$\frL_f(\partial) = (P,Q),$$ where $P,Q\in\Omega^1(\Sigma)$. We write $$\partial= \partial_0 + a Z_1 + b Z_2,$$
	where $\partial_0\in f^*\calD$ and $Z_1,Z_2$ are the Reeb directions associated to $(\lambda^1,\lambda^2)$, pulled back along $f$. We then have,
	$$\frL_f(\partial) = \Big(da + f^*\iota_{\partial_0} d\lambda^1, db + f^*\iota_{\partial_0} d\lambda^2\Big)$$
	Also let us write $$P=P_1 dx + P_2 dy, \quad Q = Q_1 dx + Q_2 dy.$$
	Evaluating both sides on $\partial_x,\partial_y$ and using properties (\ref{defn:fatWithReeb:a}), (\ref{defn:fatWithReeb:b}), (\ref{defn:fatWithReeb:c}) of \autoref{defn:fatWithReeb}, we have the system,
	\begin{align}
		\left\{\quad
		\begin{aligned}\label{eqn:horizontalOriginalSystemFirstSet}
			\partial_x a + d\lambda^1(\partial_0, X) &= P_1\\
			\partial_y a + d\lambda^1(\partial_0, Y) &= P_2\end{aligned}\right.\\
		\left\{\quad
		\begin{aligned}\label{eqn:horizontalOriginalSystemSecondSet}
			\partial_x b + d\lambda^2(\partial_0, X) &= Q_1\\
			\partial_y b + d\lambda^2(\partial_0, Y) &= Q_2
		\end{aligned}\right.
	\end{align}
	
	Now using (\ref{eqn:valueOfAXAY}) we have, $$d\lambda^2(\partial_0,X) = d\lambda^1(\partial_0, AX) = p\, d\lambda^1(\partial_0, X) + q\, d\lambda^1(\partial_0, Y)$$
	$$d\lambda^2(\partial_0,Y) = d\lambda^1(\partial_0, AY) = r\, d\lambda^1(\partial_0, X) + s\, d\lambda^1(\partial_0, Y)$$
	This transforms \eqref{eqn:horizontalOriginalSystemSecondSet} into the following system of PDEs :
	\begin{align}
		\left\{\quad
		\begin{aligned}
			\partial_x b + p\, d\lambda^1(\partial_0, X) + q\, d\lambda^1(\partial_0, Y) &= Q_1\\
			\partial_y b + r\, d\lambda^1(\partial_0, X) + s\, d\lambda^1(\partial_0, Y) &= Q_2
		\end{aligned}
		\right.\tag{$2^\prime$}\label{eqn:horizontalChangedSystemSecondSet}
	\end{align}
	Using \eqref{eqn:horizontalOriginalSystemFirstSet} we eliminate $\partial_0$ from \eqref{eqn:horizontalChangedSystemSecondSet} and get
	\begin{align}
		\left\{\quad
		\begin{aligned}\label{eqn:horizontalAfterElimination}
			\partial_x b - p\partial_x a - q\partial_y a &= Q_1 - p P_1 - q P_2\\
			\partial_y b - r\partial_x a - s\partial_y a &= Q_2 - r P_1 - s P_2
		\end{aligned}
		\right.\tag{$2^{\prime\prime}$}
	\end{align}
	Since $(p-s)^2 + 4qr < 0$, the system of PDEs given by \eqref{eqn:horizontalAfterElimination} is elliptic. Hence, the Dirichlet problem \eqref{eqn:horizontalAfterElimination} with the boundary condition 
	\begin{align}
		a|_{\partial\Sigma} = a_0, \quad b|_{\partial\Sigma} = b_0,\label{eqn:boundary}
	\end{align} 
	will have a unique solution  
	$$(a,b) = M_f(P,Q, a_0, b_0).$$
	
	Now consider an auxiliary system of equations :
	\begin{align}
		\left\{\quad
		\begin{aligned}d\lambda^1(\partial_0, JX) &= 0\\
			d\lambda^1(\partial_0, JY) &= 0
		\end{aligned}
		\right.\label{eqn:horizontalAdditional}
	\end{align}
	Then using the solution $(a,b)=M_f(P,Q,a_0,b_0)$, we get from \eqref{eqn:horizontalOriginalSystemFirstSet}, \eqref{eqn:horizontalAdditional}, the system
	\begin{align}
		\left\{\quad
		\begin{aligned}
			d\lambda^1(\partial_0, X) &= P_1 - \partial_x a\\
			d\lambda^1(\partial_0, Y) &= P_2 - \partial_y a\\
			d\lambda^1(\partial_0, JX) &= 0\\
			d\lambda^1(\partial_0, JY) &= 0
		\end{aligned}
		\right.\label{eqn:horizontalLinear}
	\end{align}
	Since $\im df_\sigma$ is $J$-totally real, $(X,Y,JX,JY)$ is a local framing of $\calD$, and since $d\lambda^1|_\calD$ is nondegenerate, \eqref{eqn:horizontalLinear} can be uniquely solved for $\partial_0$. Thus, $\frL_f(\partial)=(P,Q)$ has a unique solution   
	$$\partial = \frM_f(P,Q, a_0, b_0)$$
	subject to satisfying the auxiliary system \eqref{eqn:horizontalAdditional} and the boundary condition \eqref{eqn:boundary}.
\end{proof}

\begin{remark}
	It can be easily seen from \autoref{exmp:contactHolomorphic} that for our model case $(M,\calD)$ of holomorphic contact structure, we have $A = -J|_\calD$, where $J$ is the (integrable) almost complex structure on $M$. \autoref{lemma:invariant} can then be interpreted as follows : common isotropic $2$-subspaces of $\calD$ are complex subspaces. In particular, the left hand side of (\ref{eqn:horizontalAfterElimination}) can then be compared to the usual Cauchy-Riemann equations for the tuple of functions $(a,b)$ on $\Sigma$.
\end{remark}

We can now prove \autoref{prop:horizontalInversion}
\begin{proof}[Proof of \autoref{prop:horizontalInversion}]
	From \autoref{prop:uniqueSolution} we have that $\frL_f$ admits unique solution $\frM_f$, whenever $f$ is a $\calD$-horizontal immersion. As in \ref{prop:uniqueSolution}, $M_f$ is obtained as a solution to a Dirichlet problem and hence it is tame (see \autoref{exmp:tameOperators} (\ref{exmp:tameOperators:2})). Then $\frM_f$ is obtained from $M_f$ by solving a linear system, which is again tame. Hence the inverse $\frM_f$ is tame, as composition of tame maps is tame.
\end{proof}

\begin{remark}
	In fact, the operator $\frM_f$ above is tame of degree $1$. Indeed, the proof of tameness for elliptic boundary value problems (\cite[pg. 161]{hamiltonNashMoser}) suggests that $M_f$ is tame of degree $0$. Next, to get $\frM_f$ from $M_f$, the linear system (\ref{eqn:horizontalLinear}) involves taking first order differentials and hence it is tame of degree $1$. Thus, $\frM_f$ is tame of degree $1$.
\end{remark}

\subsection{Local Inversion of $\frD$}
From \autoref{prop:horizontalInversion} we see that the linearization $\frL_f$ admits a right inverse $\frM_f$, provided $f$ is a $\calD$-horizontal immersion. But in order to apply the Implicit Function Theorem due to Hamilton (\autoref{thm:hamiltonNashMoser}), we need to show that there is an \emph{open} set of maps $\frU\subset C^\infty(\Sigma, M)$ such that the family $\{\frL_f\;|\;f\in\frU\}$ admits a smooth tame inverse. We now identify this set $\frU$.\\

We first restrict ourselves to a collection $\frU_0$ of maps $f:\Sigma\to M$ satisfying the following conditions :
\begin{itemize}
	\item $f$ is an immersion, and
	\item $\im df$ is transverse to $\Span\langle Z_1, Z_2\rangle$.
\end{itemize}
This collection $\frU_0\subset C^\infty(\Sigma,M)$ is clearly open, since it is defined by open conditions. Now we have a canonical projection $$\pi_\calD : TM = \calD\oplus \Span\langle Z_1,Z_2\rangle\to \calD.$$
For any $f\in\frU_0$ we see that the image $\pi_\calD(\im df)$ has dimension $2$ at each point of $\Sigma$. Let us choose an almost complex structure $J:\calD\to\calD$, compatible with $d\lambda^1|_\calD$, as in \autoref{prop:uniqueSolution}. Then the set $$\Big\{(X,Y)\in \mathrm{Fr}_2\calD \;\Big|\; \text{$V=\Span\langle X,Y\rangle$ is $J$-totally real}\Big\}$$
is open in the $2$-frame bundle $\mathrm{Fr}_2\calD$, since the totally real condition $V\cap JV=0$ is open. For any such tuple $(X,Y)$ we have the framing $(X, Y, JX, JY)$ of $\calD$ and we can write $$A=\begin{pmatrix}A_{11} & A_{12}\\ A_{21} & A_{22}\end{pmatrix}$$
with respect to this basis. Let $\calO_x\subset \mathrm{Fr}_2\calD_x$ be the set of those $(X,Y) \in \mathrm{Fr}_2\calD_x$ such that,
\begin{itemize}
	\item $V = \Span\langle X,Y \rangle$ is $J$-totally real, and
	\item The matrix $A_{11}$ as above is negative definite.
\end{itemize}
Since both are open conditions, we see that $\calO_x$ is open in $\mathrm{Fr}_2\calD_x$.\\

We now define,
\begin{defn}\label{defn:admissibleMaps}
	A map $f:\Sigma\to M$ is said to be \textit{admissible} if it satisfies the following.
	\begin{itemize}
		\item $f\in\frU_0$, i.e, $f$ is an immersion with $\im df\pitchfork \Span \langle Z_1,Z_2\rangle$.
		\item $\im df_\sigma = \Span \langle f_*\partial_x, f_*\partial_y\rangle \in \pi_\calD^{-1}\big(\calO_{f(\sigma)}\big)$ for each $\sigma\in\Sigma$.
	\end{itemize}
	Denote by $\frU\subset C^\infty(\Sigma,M)$ the set of admissible maps.
\end{defn}
In fact we have defined an \emph{open} relation $\calA\subset J^1(\Sigma,M)$ such that $\frU$ is exactly the smooth holonomic solutions of $\calA$. Since $\calA$ is an open relation, we have that $\frU$ is open in $C^\infty(\Sigma,M)$. It is apparent that any $\calD$-horizontal immersion is admissible. We now prove the following.

\begin{theorem}\label{thm:openInversion}
	The linearization $\frL_f$ admits a smooth tame inverse $\frM_f$ for every $f\in\frU$.
\end{theorem}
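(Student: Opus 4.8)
The plan is to repeat, essentially verbatim, the reduction carried out in the proof of \autoref{prop:uniqueSolution}, with the single change that the image plane $\im df_\sigma$ --- no longer isotropic, since $f\in\frU$ need not be horizontal --- is replaced throughout by its $\calD$-component $\pi_\calD(\im df_\sigma)$. Fix $f\in\frU$ and choose an almost complex structure $J$ on $\calD$ compatible with $\omega_1=d\alpha_1|_\calD$. Put $X=f_*\partial_x$, $Y=f_*\partial_y$ and $X_0=\pi_\calD X$, $Y_0=\pi_\calD Y$. Since $f\in\frU$, by \autoref{defn:admissibleMaps} the plane $V=\langle X_0,Y_0\rangle\subset f^*\calD$ has rank $2$ and is $J$-totally real, so $f^*\calD=\langle X_0,Y_0,JX_0,JY_0\rangle$ is a local frame along $f$ and the block $A_{11}$ of $A$ with respect to it is negative definite.

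First I would check that the reduction does not see the $Z_i$-components of $X,Y$. Writing $\partial=\partial_0+aZ_1+bZ_2$ with $\partial_0\in\Gamma f^*\calD$, and $P=P_1\,dx+P_2\,dy$, $Q=Q_1\,dx+Q_2\,dy$, one gets $\alpha_i(\partial)=a,b$ from conditions (a),(b) of \autoref{defn:fatWithReeb}, while in $d\alpha_i(\partial,X)$ and $d\alpha_i(\partial,Y)$ every term involving a Reeb vector either pairs some $Z_j$ with an element of $\calD$ and dies by (c), or pairs $Z_1$ with $Z_2$, and $d\alpha_i(Z_1,Z_2)=0$ by (a),(b),(d); hence $d\alpha_i(\partial,X)=d\alpha_i(\partial_0,X_0)$ and $d\alpha_i(\partial,Y)=d\alpha_i(\partial_0,Y_0)$. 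Thus $\frL_f(\partial)=(P,Q)$ reduces to exactly the system \eqref{eqn:horizontalOriginalSystemFirstSet}--\eqref{eqn:horizontalOriginalSystemSecondSet} with $X_0,Y_0$ in place of $X,Y$. I then adjoin the auxiliary equations \eqref{eqn:horizontalAdditional} (with $X_0,Y_0$), rewrite $d\alpha_2(\partial_0,X_0)=d\alpha_1(\partial_0,AX_0)$, $d\alpha_2(\partial_0,Y_0)=d\alpha_1(\partial_0,AY_0)$, and expand $AX_0,AY_0$ in the frame $(X_0,Y_0,JX_0,JY_0)$; the $JX_0,JY_0$-components are annihilated by the auxiliary equations, and the $X_0,Y_0$-components are recorded by the block $A_{11}$. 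Eliminating $\partial_0$ through the first pair of equations then yields a first-order system for $(a,b)$ of the same shape as \eqref{eqn:horizontalAfterElimination}, but with coefficients read off from $A_{11}$ instead of from an $A$-invariant plane.

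The crucial point is that the principal symbol of this reduced system is a quadratic form in the cotangent variable whose definiteness is governed by $A_{11}$, and the negative-definiteness of $A_{11}$ imposed in \autoref{defn:admissibleMaps} is precisely what makes the system elliptic, uniformly over the compact disc $\Sigma$. Granting this, the Dirichlet problem for the reduced system with boundary data $a|_{\partial\Sigma}=a_0$, $b|_{\partial\Sigma}=b_0$ has a unique solution $(a,b)=M_f(P,Q,a_0,b_0)$, tame by elliptic regularity (see \autoref{eg:pdeIsTame}); and since $V$ is $J$-totally real, the frame $(X_0,Y_0,JX_0,JY_0)$ together with nondegeneracy of $d\alpha_1|_\calD$ recovers $\partial_0$ from $d\alpha_1(\partial_0,X_0)=P_1-\partial_x a$, $d\alpha_1(\partial_0,Y_0)=P_2-\partial_y a$, $d\alpha_1(\partial_0,JX_0)=0=d\alpha_1(\partial_0,JY_0)$ by a fibrewise-invertible algebraic step, again tame. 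Fixing, say, $a_0=b_0=0$ gives the solution $\partial=\frM_f(P,Q)$ of $\frL_f(\partial)=(P,Q)$, and $\frM_f$ is tame as a composition of tame maps; exactly as in \autoref{prop:horizontalInversion}, $\frM_f$ is a right inverse of $\frL_f$.

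The main obstacle is twofold. First, one must carry out the algebra of the middle paragraph and verify that the symbol determinant of the reduced system is controlled by $A_{11}$ in such a way that its negative-definiteness --- rather than the weaker ``no real eigenvalue'' property that sufficed in \autoref{prop:uniqueSolution}, where $V$ was automatically $A$-invariant --- is the correct ellipticity hypothesis, and that the ellipticity, hence the tame estimates, are uniform on $\Sigma$. Second, and this is what the application of Hamilton's theorem (\autoref{thm:hamiltonNashMoser}) in fact needs, one must upgrade the construction to the family: $\frM$ should be a \emph{smooth tame} map on $\frU\times\Omega^1(\Sigma,\bbR^2)$, not merely tame for each fixed $f$. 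This follows from the smooth dependence of solutions of elliptic boundary value problems on their coefficients, once one observes that the frame $(X_0,Y_0,JX_0,JY_0)$, the block $A_{11}$, and all the algebraic data entering the reduction depend smoothly and tamely on the $1$-jet of $f$ over the open set $\frU$.
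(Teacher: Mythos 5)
Your proposal follows essentially the same route as the paper's proof: decompose $\partial=\partial_0+aZ_1+bZ_2$, observe via the Reeb conditions that only the $\calD$-components $\pi_\calD(f_*\partial_x),\pi_\calD(f_*\partial_y)$ enter, adjoin the auxiliary equations $d\alpha_1(\partial_0,JX_0)=0=d\alpha_1(\partial_0,JY_0)$ to kill the $JX_0,JY_0$-components of $AX_0,AY_0$, eliminate $\partial_0$ to obtain a first-order system for $(a,b)$ whose ellipticity is exactly the condition on the block $A_{11}$ built into the definition of $\frU$, and then recover $\partial_0$ by a fibrewise algebraic inversion, with tameness coming from elliptic regularity and composition of tame maps. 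Your closing remark about upgrading to a smooth tame family $\frM$ on $\frU\times\Omega^1(\Sigma,\bbR^2)$ is a point the paper only addresses after the theorem, so it is a welcome extra care rather than a deviation.
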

\begin{proof}
	Suppose $f\in\frU$. We have $\im df = \Span \langle f_*\partial_x, f_*\partial_y\rangle$. Let us write $$f_*\partial_x = X + a_1 Z_1 + a_2 Z_2, \quad f_*\partial_y = Y + b_1 Z_1 + b_2 Z_2$$
	where $X=\pi_\calD(f_*\partial_x), Y=\pi_\calD(f_*\partial_y)$. By assumption $(X,Y)\in Fr_2\calD$ so that, $(X,Y,JX,JY)$ is a frame of $\calD$. Hence we can write
	\begin{align}
		\left\{\quad
		\begin{aligned}
			AX &= p X + q Y + p^\prime JX + q^\prime JY\\
			AY &= r X + s Y + r^\prime JX + s^\prime JY
		\end{aligned}
		\right.\label{eqn:generalInvariant}
	\end{align}
	The matrix of $A$ has the form $$\begin{pmatrix}
		p & r & * & *\\
		q & s & * & *\\
		p^\prime & r^\prime & * & *\\
		q^\prime & s^\prime & * & *
	\end{pmatrix}$$
	and by the hypothesis on $\frU$, $A_{11}=\begin{pmatrix}p & q \\ r & s\end{pmatrix}$ is negative definite, which is equivalent to $$(p-s)^2 + 4qr < 0.$$
	
	Now, we wish to solve $\frL_f(\partial) = (P,Q)$, as we did in \autoref{prop:uniqueSolution}, where
	\begin{align*}
		\frL_f : \Gamma f^*TM &\to \Omega^1(\Sigma,\bbR^2)\\
		\partial &\mapsto \Big(d\big(\lambda^i\circ\partial\big) + f^*\iota_\partial d\lambda^i\Big)_{i=1,2}
	\end{align*}
	Let $\partial = \partial_0 + a Z_1 + b Z_2$, where $\partial_0\in f^*\calD$. Since $[Z_1,Z_2]=0$ (by (\ref{defn:fatWithReeb:d}) of \autoref{defn:fatWithReeb}), we have, $$d\lambda^1(Z_1,Z_2) = Z_1(\lambda^1(Z_2)) - Z_2(\lambda^1(Z_1)) - \lambda^1([Z_1,Z_2]) = Z_1(0) - Z_2(1) - \lambda^1(0) = 0.$$
	and similarly, $d\lambda^2(Z_1,Z_2) = 0$. Hence, $$d\lambda^1(\partial, f_*\partial_x) = d\lambda^1(\partial_0 + a Z_1 + b Z_2, X + a_1 Z_1 + a_2 Z_2) = d\lambda^1(\partial_0, X)$$
	and similarly the remaining ones. Thus, we get a system as before :
	\begin{align}
		&\left\{\quad
		\begin{aligned}
			\partial_x a + d\lambda^1(\partial_0, X) &= P_1\\
			\partial_y a + d\lambda^1(\partial_0, Y) &= P_2\\
		\end{aligned}
		\right.\label{eqn:generalSystemFirstSet}\\
		&\left\{\quad
		\begin{aligned}
			\partial_x b + d\lambda^1(\partial_0, AX) &= Q_1\\
			\partial_y b + d\lambda^1(\partial_0, AY) &= Q_2
		\end{aligned}
		\right.\label{eqn:generalSystemSecondSet}
	\end{align}
	We add the linear equations 
	\begin{align}
		d\lambda^1(\partial_0, JX) = 0 = d\lambda^1(\partial_0, JY) \label{eqn:generalSystemAddition}
	\end{align}
	to \eqref{eqn:generalSystemFirstSet},\eqref{eqn:generalSystemSecondSet}. Then using \eqref{eqn:generalInvariant} and \eqref{eqn:generalSystemAddition}, the system \eqref{eqn:generalSystemSecondSet} becomes
	\begin{align}
		\left\{\quad
		\begin{aligned}
			\partial_x b + p\, d\lambda^1(\partial_0, X) + q\, d\lambda^1(\partial_0, Y) &= Q_1\\
			\partial_y b + r\, d\lambda^1(\partial_0, X) + s\, d\lambda^1(\partial_0, Y) &= Q_2
		\end{aligned}
		\right.\label{eqn:generalSystemSecondSetChanged}\tag{$8^\prime$}
	\end{align}
	Using \eqref{eqn:generalSystemFirstSet} we can eliminate $\partial_0$ in \eqref{eqn:generalSystemSecondSetChanged} and get
	\begin{align}
		\left\{\quad
		\begin{aligned}
			\partial_x b - p\partial_x a - q\partial_y a &= Q_1 - p P_1 - q P_2\\
			\partial_y b - r\partial_x a - s\partial_y a &= Q_2 - r P_1 - s P_2
		\end{aligned}
		\right.\label{eqn:generalSystemSecondSetEliminated}\tag{$8^{\prime\prime}$}
	\end{align}
	
	Since $(p-s)^2 + 4qr < 0$, we have that \eqref{eqn:generalSystemSecondSetEliminated} is elliptic. Hence given any arbitrary boundary condition $a|_{\partial\Sigma}=a_0,\; b|_{\partial\Sigma}=b_0$, we have the unique solution $$(a,b) = M_f(P,Q,a_0,b_0).$$
	Then as done in \autoref{prop:uniqueSolution}, we obtain unique solution $$\partial=\frM_f(P,Q,a_0,b_0)$$
	to the system given by  \eqref{eqn:generalSystemFirstSet}, \eqref{eqn:generalSystemSecondSet} and \eqref{eqn:generalSystemAddition}.
	Thus whenever $f\in\frU$, we have a solution $\frM_f$ for the linearized equation $\frL_f=(P,Q)$.
	As argued in the proof of \autoref{prop:horizontalInversion}, both $\frL_f$ and $\frM_f$ are tame operators.
\end{proof}

Since $\frL_f$ is surjective for every $f\in\frU$ and the family of right inverses $\frM:\frU\times \Omega^1(\Sigma,\bbR^2)\to C^\infty(\Sigma,M)$ is a smooth tame map we obtain the following by \autoref{thm:hamiltonNashMoser}.
\begin{theorem}
	The operator $\frD$ restricted to $\frU$ is locally right invertible. Given any $f_0\in\frU$, there exists an open neighborhood $U$ of $f_0$ and a smooth tame map $\frD_{f_0}^{-1}:\frD(U)\to U$ such that $\frD\circ \frD_{f_0}^{-1}=\textrm{Id}$.
\end{theorem}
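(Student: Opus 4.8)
The plan is to read this statement as a direct application of the Nash--Moser implicit function theorem for smooth tame maps between tame Fréchet spaces, in Hamilton's formulation (\autoref{thm:hamiltonNashMoser}), with $\frU$ playing the role of the open set over which the linearization is invertible. That theorem has, in effect, three hypotheses, and the whole proof consists in verifying them and then quoting the conclusion verbatim: (i) $\frD$ is a smooth tame map defined on an open subset of a tame Fréchet manifold, with tame Fréchet target; (ii) the linearization $\frL_f = D\frD(f)$ is surjective for every $f$ in that open set; and (iii) the assignment $(f,(P,Q))\mapsto \frM_f(P,Q)$ is itself a smooth tame map on $\frU\times\Omega^1(\Sigma,\bbR^2)$.

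First I would record that the ambient objects are of the right type. By \autoref{eg:sectionSpaceIsTame}, since $\Sigma = \bbD^2$ is a compact manifold with boundary, $C^\infty(\Sigma,M)$ is a tame Fréchet manifold and $\Omega^1(\Sigma,\bbR^2)$ is a tame Fréchet space; by \autoref{eg:pdeIsTame}, the first order nonlinear differential operator $\frD(f) = (f^*\alpha_1, f^*\alpha_2)$ is a smooth tame map; and $\frU$ is open in $C^\infty(\Sigma,M)$ because it is the set of holonomic solutions of the open first order relation $\calA \subset J^1(\Sigma,M)$ of \autoref{defn:admissibleMaps}. This settles (i).

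Next I would assemble the family of right inverses. By \autoref{thm:openInversion}, for every $f\in\frU$ the linearization $\frL_f$ is surjective, which is (ii); fixing once and for all the homogeneous boundary data $a_0 = b_0 = 0$ produces a canonical right inverse $\frM_f := \frM_f(\,\cdot\,,\,\cdot\,,0,0)$ of $\frL_f$. To get (iii) I would follow the two step construction in the proof of \autoref{thm:openInversion}: the coefficients $p,q,r,s$ in \eqref{eqn:generalInvariant} depend smoothly on the $1$-jet of $f$; the Dirichlet problem \eqref{eqn:generalSystemSecondSetEliminated} stays uniformly elliptic near any fixed $f_0\in\frU$ because $(p-s)^2 + 4qr < 0$ is an open condition; the solution of an elliptic boundary value problem depends smoothly and tamely (of degree zero) on its coefficients and data, by the estimates recalled in \cite{hamiltonNashMoser}; and the subsequent recovery of $\partial_0$ from the pointwise invertible linear system built out of \eqref{eqn:generalSystemFirstSet} and \eqref{eqn:generalSystemAddition}, using nondegeneracy of $d\alpha_1|_\calD$, is again tame of degree zero. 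Composing, $\frM$ is a smooth tame map $\frU\times\Omega^1(\Sigma,\bbR^2)\to C^\infty(\Sigma,M)$. With (i)--(iii) in hand, \autoref{thm:hamiltonNashMoser} gives exactly the assertion: for each $f_0\in\frU$ there is an open neighbourhood $U$ of $f_0$ and a smooth tame map $\frD_{f_0}^{-1}:\frD(U)\to U$ with $\frD\circ\frD_{f_0}^{-1} = \mathrm{Id}$.

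The step I expect to be the real obstacle, everything else being bookkeeping, is the \emph{uniformity} needed for (iii): one must check that, as $f$ varies over a small neighbourhood of a fixed $f_0$ in $\frU$, the tame estimates for \eqref{eqn:generalSystemSecondSetEliminated} hold with constants that do not degenerate, and likewise for the linear inversion step. This is precisely where openness of the admissibility relation $\calA$ is used essentially: on a sufficiently small neighbourhood of $f_0$ the principal symbol of \eqref{eqn:generalSystemSecondSetEliminated} stays uniformly elliptic and the coefficients remain in a fixed bounded family, while $d\alpha_1|_\calD$ stays uniformly invertible on the framing $\langle X,Y,JX,JY\rangle$, so the Sobolev/Schauder estimates for the Dirichlet problem hold with uniform tame constants. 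This is exactly the form of hypothesis Hamilton's theorem demands, so once the uniformity is made explicit the application is automatic.
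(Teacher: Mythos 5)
Your proposal is correct and follows essentially the same route as the paper, which likewise obtains the result by verifying the hypotheses of Hamilton's implicit function theorem: surjectivity of $\frL_f$ for $f\in\frU$ (from \autoref{thm:openInversion}) plus smooth tameness of the family $\frM:\frU\times\Omega^1(\Sigma,\bbR^2)\to C^\infty(\Sigma,M)$. Your write-up is in fact more careful than the paper's (which states these hypotheses in one sentence without elaboration), particularly in fixing the boundary data to obtain a canonical right inverse and in flagging the uniformity of the tame estimates as $f$ varies near $f_0$.
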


The proof of the Implicit Function Theorem, in fact, implies that there exists a positive integer $r_0$ such that the following holds true.
\begin{theorem}\label{thm:solution} 
	Let $f_0\in\frU$ and $g_0 = \frD(f_0)$. Let $\epsilon>0$ be any positive number. Then there exists a $\delta > 0$ and an integer $r_0$, such that for $\alpha \ge r_0$ and for every $g\in\Omega^1(\Sigma,\bbR^2)$ with $|g|_{\alpha} < \delta$ there is an $f = \frD_{f_0}^{-1}(g_0 + g)\in\frU$ satisfying the following conditions :
	$$\frD(f) = g_0 + g \quad \text{ and } \quad |f - f_0|_{\alpha+2} < \epsilon.$$
\end{theorem}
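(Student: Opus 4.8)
The statement is a quantitative sharpening of the local right-invertibility theorem just established, and the plan is to extract it by inspecting the proof of Hamilton's implicit function theorem (\autoref{thm:hamiltonNashMoser}) rather than by proving anything new. Recall that \autoref{thm:openInversion} furnishes, for every $f\in\frU$, a right inverse $\frM_f$ of the linearization $\frL_f$ which is tame of degree $0$, and that $(f,g)\mapsto\frM_f(g)$ is a smooth tame map on $\frU\times\Omega^1(\Sigma,\bbR^2)$; together with the fact that $\frD$ is itself a smooth tame operator (a first-order differential operator between the tame Fr\'echet spaces of \autoref{eg:sectionSpaceIsTame}), these are precisely the hypotheses under which Hamilton's theorem produces the local inverse $\frD_{f_0}^{-1}$, defined on a neighbourhood of $g_0=\frD(f_0)$ by a Newton iteration with smoothing operators initialised at $f_0$.

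First I would dispose of the membership claim: $\frU$ is open in $C^\infty(\Sigma,M)$ and $\frD_{f_0}^{-1}$ is continuous (being smooth tame), so after shrinking the domain neighbourhood we may assume $\frD_{f_0}^{-1}$ takes values in $\frU$, whence $f=\frD_{f_0}^{-1}(g_0+g)\in\frU$ for every sufficiently small $g$. It then remains to produce the estimate together with the integer $r_0$. For this I would use the quantitative content of Hamilton's proof: the Newton iterates obey tame estimates whose loss of derivatives is governed by the order of $\frD$ (here $1$) and the degree of the family $\frM$ (here $0$), together with the fact --- already visible in \autoref{prop:uniqueSolution} --- that the solution operator of the reduced, first-order elliptic Dirichlet problem gains regularity. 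Passing to the limit one obtains that $\frD_{f_0}^{-1}$ is not merely smooth tame but tame with a definite base $r_0$; combining the corresponding tame estimate with the identity $\frD_{f_0}^{-1}(g_0+g)-f_0=\int_0^1\big(D\frD_{f_0}^{-1}\big)(g_0+tg)[g]\,dt$ and interpolating away lower-order terms on a fixed bounded neighbourhood yields an inequality of the form $|f-f_0|_{s+2}\le C_s\,|g|_s$, valid for all $s\ge r_0$ whenever $|g|_{r_0}$ is small. One then picks $\delta>0$ small enough for the construction to apply and with $C_s\delta<\epsilon$.

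The one point that genuinely requires the shape of $\frU$ --- and where I expect the real work to be --- is checking that the Newton iteration stays inside $\frU$ at every step, since $\frM_{f_k}$ is defined and invertible only there. This is exactly where the quantitative closeness is used: $\frU$ is the holonomic solution set of an \emph{open} first-order differential relation $\calA\subset J^1(\Sigma,M)$ (\autoref{defn:admissibleMaps}), so once $|f_k-f_0|_{r_0}$ is controlled, with $r_0\ge 2$, the $1$-jet $j^1f_k$ remains in a fixed neighbourhood of $j^1f_0$ inside $\calA$; an induction on $k$, the increments $f_{k+1}-f_k$ being controllably small by the scheme, then keeps all iterates admissible. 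Apart from this, the argument is bookkeeping internal to Hamilton's construction, and the main obstacle is simply organising that bookkeeping so that the index shift $s\mapsto s+2$ and the threshold $r_0$ come out exactly as stated.
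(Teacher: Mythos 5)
Your proposal takes essentially the same route as the paper: the paper offers no separate proof of this theorem, stating only that it follows from the proof of Hamilton's Implicit Function Theorem, which is exactly the extraction-by-inspection you describe. Your additional remarks on keeping the Newton iterates inside the open set $\frU$ and on the origin of $r_0$ and the index shift are reasonable elaborations of what the paper leaves implicit.
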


\section{Existence of Horizontal Germs and the Local $h$-Principle}
\label{sec:hPrinicple}
Since we are only interested in germs, without loss of generality, we assume that $M=\bbR^6$ and $\Sigma=\bbR^2$. Suppose, we have a corank $2$ fat distribution $\calD$ on $M$, which admits Reeb directions (\autoref{defn:fatWithReeb}). Consider the relation $\calA\subset J^1(\Sigma, M)$, as in \autoref{sec:inversion}, such that the set of admissible maps (\autoref{defn:admissibleMaps}) $\frU$ are exactly the smooth holonomic sections of $\calA$, i.e, we have $\frU= \Sol\calR$. We have shown that the operator
\begin{align*}
	\frD:\frU\subset C^\infty(\Sigma, M) &\to \Omega^1(\Sigma,\bbR^2)\\
	f &\mapsto \big(f^*\lambda^1, f^*\lambda^2\big)
\end{align*}
is locally invertible over $\frU$.\\

Now, following Gromov (\cite{gromovBook}), we can get the (parametric) local h-principle. One crucial thing to observe is that the inversion of $\frD$ as we have obtained, does not conform to the notion of \emph{locality} as considered by Gromov (\cite[pg. 117-118]{gromovBook}). But we observe that the proof of the local $h$-principle goes through, without the locality property of $\calD^{-1}$. For the sake of completeness, we reproduce the proof following Gromov.

\begin{defn}(\cite[pg. 118]{gromovBook})
	A germ $f:\Sigma\to M$ at $\sigma\in \Sigma$ is called an \emph{infinitesimal solution} of order $\alpha$ of $\frD(f)=0$ if $$j^\alpha_{\frD(f)}(\sigma) = 0$$ i.e, the section $\frD(f)$ has zero $\alpha^{\text{th}}$-jet at the point $\sigma$.
\end{defn}

Observe that, since $\frD$ has order $1$, the property that $f$ is an infinitesimal solution of order $\alpha$, only depends on the jet $j^{\alpha + 1}_f(\sigma)$. Consider the relation $\calR_\alpha = \calR_\alpha(\frD,0,\calA) \subset J^{r+1}(\Sigma,M)$ consisting of jets $j^{\alpha+1}_f(\sigma)$ represented by $C^\infty$ germs $f:\Sigma\to M$ at $\sigma$, so that
$$j^\alpha_{\frD(f)}(\sigma) = 0 \quad\text{and}\quad j^{\alpha+1}_f(\sigma) \in\calA.$$
Then one sees that $C^{\alpha + 1}$ holonomic sections of $\calR_\alpha$ are exactly the admissible $C^{\alpha+1}$-solutions of $\frD = 0$. In particular, for $\alpha\ge 0$, the $C^\infty$-solutions $\calR_\alpha$ are all same, namely the $C^\infty$-solutions of $\frD = 0$ which are admissible, i.e, we have $$\text{$\Sol\calR_\alpha$ is the set of $\calD$-horizontal immersions, for any $\alpha\ge 0$.}$$
We then prove the following.

\begin{theorem}\label{thm:existenceOfLocalSolution}
	If $\alpha$ is sufficiently large, then for any jet  $j^\alpha_f(\sigma)\in\calR_\alpha$, represented by some $f : \Op(\sigma) \to M$, we have a homotopy $f_t : \Op(\sigma)\to M$, such that $f_0 = f$ on some $\Op(\sigma)$ and $f_1$ is a $\calD$-horizontal admissible solution, i.e, $\frD(f_1) = 0$. Furthermore, the jet $j^{\alpha+1}_{f_t}(\sigma)$ belongs to $\calR_\alpha$, for all $t\in [0,1]$.
\end{theorem}
\begin{proof}
	Suppose $f$ is defined on an open ball $V\subset \Sigma$ about $\sigma$. Since $f\in \frU$ and $\calA$ is open, we can get a neighborhood $V_0$ of $\sigma$, such that $\sigma\in V_0\subset V$ and $f|_{V_0}$ is a solution of $\calA$. In other words, $f|_{V_0}$ is admissible. Denote, $g_0 = \frD(f|_{V_0})$.
	
	Since $j^{\alpha+1}_f(\sigma)\in\calR_\alpha$, we have $j^{\alpha}_{g_0}(\sigma) = j^{\alpha}_{\frD(f)}(\sigma)  = 0$. Hence, for any given $\epsilon > 0$, there exists a neighborhood $W\subset V_0$ of $\sigma$ such that $|g_0|_\alpha < \epsilon$ on $W$. We can get some $g_\epsilon$ on $V_0$ so that,
	\begin{itemize}
		\item $g_\epsilon = -g_0$ on some neighborhood $W\subset V_0$ of $\sigma$, and 
		\item $g_\epsilon$ is $\epsilon$-small in $C^\alpha$-norm, i.e, $|g_\epsilon|_\alpha < \epsilon$ on $V_0$.
	\end{itemize}
	
	Now let us apply \autoref{thm:solution} for the domain $V_0$. Since $y_0 := f|_{V_0}$ is admissible, we have that $\frD_{y_0}$ admits a local inverse. In particular, there exists some $\epsilon,\delta > 0$ such that for any $|g|_{\alpha} < \epsilon$ we have unique $y$ such that $\frD(y)=\frD(y_0) + g$ and $|y-y_0|_{\alpha+1} < \delta$. Here we require that $\alpha$ to be sufficiently large. Now, in particular, for this $\epsilon = \epsilon(y_0, \alpha)$, we can get $W$ and $g_\epsilon$ as above. Since, $$|t g_\epsilon|_\alpha < t\epsilon < \epsilon, \quad\text{for $t\in [0,1]$,}$$ we have unique solutions $$f_t = \frD_{y_0}^{-1}(t g_\epsilon),$$
	over $V_0$, satisfying $|f_t - y_0|_{\alpha+1} < \epsilon$ for $t\in[0,1]$. Now, $$\frD(f_t) = \frD(y_0) + t g_\epsilon = \frD(f|_{V_0}) + t g_\epsilon= g_0 + tg_\epsilon.$$
	In particular, we have $\frD(f_0) = g_0$ and hence $f_0=u|_{V_0}$ from uniqueness. On the other hand, over $W$, $$\frD(f_1) = g_0 + g_\epsilon = g_0 - g_0 = 0.$$
	Thus $f_1$ is a solution $\frD(f_1) = 0$, over $W$. Furthermore, $f_t$ is admissible and $$j^\alpha_{\frD(f_t)}(\sigma) = j^\alpha_{g_0 + tg_\epsilon}(\sigma) = 0, \text{ as $g_\epsilon = -g_0$ on some $\Op(\sigma)$.}$$ Thus, $j^{\alpha+1}_{f_t}(\sigma)\in\calR_\alpha$ for all $t\in[0,1]$.
\end{proof}

We now have a (parametric) local $h$-principle for $\calR_\alpha$ (\cite[pg. 119]{gromovBook})
\begin{corr}\label{corr:localHPrinAlpha}
	For $\alpha$ large enough, the jet map $j^{\alpha+1} : \Sol \calR_\alpha \to \Gamma\calR_\alpha$ is a local weak homotopy equivalence.
\end{corr}

In order to prove the existence of a horizontal germ, i.e, a local solution of $\frD= 0$, we need to show that $\calR_\alpha \ne \emptyset$ at some $\sigma$. One issue with \autoref{thm:existenceOfLocalSolution} is that we do not specify the higher jet order $\alpha$ that is crucial in order to get a local solution. We now show that, in fact, we can get a lift to any arbitrary higher jet from the first jet relation of isotropic horizontal maps. Recall that given any map $f$ satisfying $f^*\lambda^i = 0$ we have, taking derivatives, that $f^*d\lambda^i = 0$. That is, $\im df$ is $d\lambda^i$-isotropic. Now from \autoref{prop:horizontalInversion}, we have that every solution is automatically admissible. On the other hand, we have the relation $\calR\subset\calR_0\subset J^1(\Sigma,M)$ consisting of jets $(x,y, F:T_x\Sigma\to T_y M)$ such that, $F^*d\alpha^s = 0$ for $s=1,2$. That is, sections of $\calR$ are bundle maps $F:T\Sigma\to TM$, which is a formal isotropic $\calD$-horizontal immersion. Observe that, $$\Sol\calR=\Sol\calR_\alpha,\;\text{for any $\alpha\ge 0$.}$$
We have the following result.

\begin{lemma}\label{lemma:jetLifting}
	For any $\alpha \ge 1$, the jet projection map $p=p^{\alpha + 1}_1 : J^{\alpha + 1}(\Sigma,M) \to J^1(\Sigma,M)$ maps $\calR_\alpha|_{(x,y)}$ surjectively onto $\calR|_{(x,y)}$, for any $(x,y)\in\Sigma\times M$. Furthermore, the fiber of $p$ over any jet in $\calR|_{(x,y)}$ is contractible and consequently, the induced map $\Gamma\calR_\alpha \to \Gamma\calR$ is a local weak homotopy equivalence.
\end{lemma}

We defer the details of the proof to \autoref{sec:jetLifting}. Let us first get the local $h$-principle.

\begin{proof}[Proof of \autoref{thm:hPrinciple}]
	From \autoref{corr:localHPrinAlpha}, we have that for $\alpha$ sufficiently large, the jet map $j^{\alpha + 1} : \Sol\calR \to \Gamma\calR_\alpha$ is a local weak weak homotopy equivalence. On the other hand, by \autoref{lemma:jetLifting}, the jet projection $p^{\alpha + 1}_1 : \Gamma\calR_\alpha \to \Gamma\calR$ is a weak homotopy equivalence. Hence the composition $$j^1 = p^{\alpha+1}_1 \circ j^{\alpha+1}: \Sol \calR \to \Gamma \calR$$ is a local weak homotopy equivalence. In other words, $\calD$-horizontal immersions satisfy the (parametric) local $h$-principle.
\end{proof}

Next, we get the existence of germs of $\calD$-horizontal $2$-submanifolds.

\begin{proof}[Proof of \autoref{thm:existenceOfGerms}]
	Suppose $\calD= \ker\lambda^1\cap \ker\lambda^2$ for some local $1$-forms $\lambda^i$ around some $y\in M$. Pick some arbitrary $0\ne v\in \calD_x$ and set $u = Av$, where $A$ is the (local) automorphism. Then observe that, $$d\lambda^1(u,v) = d\lambda^1(Av, v) = d\lambda^2(v,v) = 0 \quad\text{and,}\quad d\lambda^2(u,v) = d\lambda^1(u, Av) = d\lambda^1(u,u) = 0.$$
	In other words, $\Span \langle u, v\rangle \subset \calD_x$ is $\Omega$-isotropic. Now, consider the jet $\sigma = (0, y, F: T_0\bbD^2 \to T_y M) \in J^1(\bbD^2, M)$, given by, $$F(\partial_x) = u, \; F(\partial_y) = v.$$
	We clearly have $\sigma\in\calR|_{(0,y)}$ by construction. But then an application of the the local $h$-principle gives us that there exists a $\calD$-horizontal immersion $f: \Op(0) \to M$. Since $f$ is an immersion, it is a local diffeomorphism and thus we have a (germ of a) $\calD$-horizontal submanifold of dimension $2$.
\end{proof} 

\subsection{Proof of \autoref{lemma:jetLifting}}
\label{sec:jetLifting}
In this section, we discuss the proof of \autoref{lemma:jetLifting}. Instead of proving it only for fat distributions of rank $4$ on $6$-dimensional manifold, we consider an arbitrary fat distribution of corank $p$ on manifolds of dimension $N$. Since we are only considering jets of maps, let us consider $\Sigma = \bbR^2$ and $M=\bbR^N$, with fixed coordinates $\{x^1,x^2\}$ on $\Sigma$ and $\{y^1,\ldots,y^N\}$ on $M$. Suppose $\calD\subset TM$ is a corank $p$ fat distribution, given as the common kernel of $1$-forms $\lambda^1,\ldots,\lambda^p$ where we have $\lambda^s = \lambda^s_i dy^i$. For any $f:\Sigma\to M$ we have the operator $$\frD : f \mapsto \big(f^*\lambda^s\big)_{s=1,\ldots,p}$$
We need to understand the relation $\calR_\alpha\subset J^{\alpha+1}(\Sigma,M)$, which consists of jets $j^{\alpha+1}_f(x)$, where $f:\Op(x)\to M$ satisfies $j^\alpha_{\frD(f)}(x) = 0$. We also have the first jet relation $\calR\subset J^1(\Sigma,M)$ consisting of jets $(x,y,F:T_x\Sigma\to T_y M)$, where $F$ is an injective map taking $T_\sigma\Sigma$ to a common isotropic subspace of $\calD_y$. We prove the following stronger version of \autoref{lemma:jetLifting}.
\begin{lemma}\label{lemma:jetLiftingGeneral}
	The jet projection map $p = p^{\alpha+1}_1 : J^{\alpha+1}(\Sigma,M)\to J^1(\Sigma,M)$ maps $\calR_\alpha|_{(x,y)}$ surjectively onto $\calR|_{(x,y)}$. Furthermore, the fiber of $p$ over any fixed jet is contractible and consequently $p:\Gamma\calR_\alpha\to\Gamma\calR$ is a weak homotopy equivalence.
\end{lemma}

First we need to understand the equation $j^\alpha_{\frD(f)}(x)$ in terms of jets. We write down, $f^*\lambda^s = \eta^s_a dx^a$, where we have the functions $$\eta^s_a = f^*\lambda^s(\partial_a) = (\lambda^s_i\circ f) \partial_a f^i, \qquad 1\le s\le p, 1\le a \le 2.$$
Since $\frD(f) = \big(f^*\lambda^1,\ldots,f^*\lambda^p\big)$, we have that the jet $j^\alpha_{\frD(f)}(x)$ is nothing but $j^\alpha_{\eta^s_a}(x)$. We need to find out its higher derivatives. Let us fix our conventions first.\\

\paragraph{\bfseries Convention for Multi-Indices :} By a multi-index of order $r$ on the coordinates $\{x^1,x^2\}$ we will consider an \emph{ordered} tuple $I = (i_1 \le \ldots \le i_r)$ where $i_j\in\{1,2\}$. We denote, $$\partial_I = \partial_{i_1}\ldots\partial_{i_r} = \partial_{x^{i_1}} \ldots \partial_{x^{i_r}}.$$
A typical multi-index of order $\alpha$ looks like $\big(\underbrace{1,\ldots,1}_a,\underbrace{2,\ldots,2}_b\big)$, for some integers $a,b \ge 0$ satisfying $\alpha = a + b$. We denote $|I|$ as the order of a multi-index. A multi-index of order $1$ will be written without the parentheses. Let us denote by $\bbN_2^r$ the set of all multi-indices of order $r$ over the coordinates $\{x^1,x^2\}$.

For any subsequence $I^\prime\subset I$, we will denote $I - I^\prime$ as the multi-index obtained by taking the complimentary sequence. In particular for a given $I=(i_1,\ldots,i_r)$, we have $$I_j := I - i_j = \big(i_1,\ldots, i_{j-1},i_{j+1},\ldots, i_r\big).$$
Also, for given two multi-indices $I$ and $J$, by $I + J$ we will mean the multi-index obtained by first concatenating the two lists and then reordering in increasing order. In particular, observe that for any $i\in I$ we have, $(I - i)+ i = I$.\\

Now, for any multi-index $I=(i_1,\ldots,i_\alpha)$ of order $\alpha$, we have the higher order partial derivative,
\begin{align*}
	\partial_I \eta^s_a &= (\lambda^s_i \circ f) \partial_{I+a} f^i
	\\
	&\qquad + (\partial_\nu\lambda^s_i \circ f) \partial_I f^\nu \partial_a f^i + \sum_{l=1}^\alpha (\partial_\nu\lambda^s_i\circ f) \partial_{i_l} f^\nu \partial_{I-i_l + a} f^i
	\\
	&\qquad + \text{terms involving partial derivatives of $f$ of order $<\alpha$}
\end{align*}
Then, the condition $j^\alpha_{\frD(f)}(x) = 0$ is equivalent to the set of equations $$\big\{\partial_I \eta^s_a (x) = 0 \;\big|\; 1\le s\le p,\; 1\le a\le 2,\; |I| \le \alpha\}.$$
We now translate this system in the language of jets.\\

Consider an $\alpha+1$-jet $$\sigma = \big(x,\; y, \; P_i : \Sym^i T_x\Sigma \to T_y M,\; i=1,\ldots,\alpha+1\big)\in J^{\alpha+1}_{(x,y)}(\Sigma, M).$$
Note that the $i^\text{th}$ symmetric tensor $P_i$ is completely determined by the values $P^k_i(\partial_I)$, where $P^k_i$ are the components of $P_i$ and $\{\partial_I \;|\; I \in\bbN_2^i\}$ forms a basis of the symmetric space $\Sym^i T_x\Sigma$. Now, suppose the jet $\sigma$ is represented as $j^{\alpha+1}_f(x)$ for some $f:\Op(x)\to M$. Then for some $I=(i_1,\ldots,i_r)$ where $r\le\alpha$ and some $1\le a\le 2, \; 1\le s\le p$, the equation $\partial_I\eta^s_a(x) = 0$ gives us the following.
\begin{align*}
	\begin{aligned}
		\lambda^s_k(y) P_{r+1}^k(\partial_{I+a}) + \partial_\nu\lambda^s_k(y) P_1^k(\partial_a) P_{r}^\nu(\partial_I) + \sum_{l=1}^r \partial_\nu\lambda^s_k(y) P_1^\nu(\partial_{i_l}) P_{r}^k(\partial_{I-i_l+a}) + \text{terms involving $P_{ < r }$} = 0
	\end{aligned} \tag*{$\boxed{\partial_I \eta^s_a}$}
\end{align*}
Note that we have labeled these equations by $(\partial_I\eta ^s_a)$ as well. In particular, we have that the relation $\calR_\alpha$ consists of jets $\sigma\in J^{\alpha+1}(\Sigma,M)$ which satisfy the jet equations $(\partial_I\eta^s_a)$ for each $1\le s\le p, 1\le a\le 2$ and for each multi-index $|I|\le\alpha$.\\

Now, a jet $(x,y, P_1 : T_x\Sigma\to T_y M)\in J^1_{(x,y)}(\Sigma,M)$ belongs to $\calR_0|_{(x,y)}$ precisely when the equations
$$\lambda^s_k(y) P_1^k(\partial_a) = 0, \quad 1\le s\le p, 1\le a\le 2$$
hold. This is nothing but the condition that $P_1(T_x\Sigma)\subset \calD_y$. We also have the relation $\calR\subset\calR_0\subset J^1(\Sigma,M)$ consisting of jets $\sigma = (x,y,P_1)$ which further satisfy the isotropy condition. That is for each $v,w \in T_x \Sigma$ we have, $d\lambda^s(P_1(v), P_1(w)) = 0 \quad 1\le s\le p$. This isotropy condition can be expressed more explicitly as, $$\Big(\partial_\nu \lambda^s_\mu(y) - \partial_\mu\lambda^s_\nu(y)\Big) P_1^\nu(\partial_a) P_1^\mu(\partial_b) = 0,\quad\text{for each $1\le a,b\le 2$ and each $1\le s \le p$.}$$
For $\alpha\ge \beta$, we have the jet projection map $p = p^{\alpha+1}_{\beta+1}:J^{\alpha+1}(\Sigma, M) \to J^{\beta+1}(\Sigma,M)$ and clearly, $p$ maps $\calR_\alpha$ into $\calR_\beta$, since every defining equation for $\calR_\beta$ is also used to define $\calR_\alpha$. We now proceed with the proof of \autoref{lemma:jetLiftingGeneral}, which is essentially done via induction on $\alpha\ge 1$.\\

\paragraph{\bfseries Induction Statement $\calP(\alpha)$ :} For given $\alpha\ge 1$, the map $p=p^{\alpha+1}_1$ maps $\calR_\alpha$ into $\calR$. The system of equations involved in defining the relation $\calR_\alpha$ can be replaced by another system such that the following holds.
\begin{itemize}
	\item The solution space remains unchanged.
	\item The new system is a triangular one. That is, we can solve $P_1,P_2,\ldots$ etc successively.
	\item The highest order symmetric tensor $P_{\alpha+1}$ in the system appears as follows :
	\begin{equation}
		\begin{pmatrix}
			\lambda^1_1(y)&\ldots&\lambda^1_N(y)\\
			\vdots && \vdots \\
			\lambda^p_1(y) &\ldots &\lambda^p_N(y)
		\end{pmatrix}\begin{pmatrix}
			P_{\alpha+1}^1(\partial_J)\\ \vdots \\ P_{\alpha+1}^N(\partial_J)
		\end{pmatrix} = \text{a $p\times 1$-vector involving $P_{\le \alpha}$ terms}
		\tag*{$\boxed{\lambda_{\alpha+1}^J}$} \label{eqn:lambda:indHypo}
	\end{equation}
	for each multi-index $|J|=\alpha+1$. Note that we label these systems by $(\lambda_\alpha^J)$, since the coefficient matrix is the matrix of the $\bbR^p$-valued $1$-form $\lambda = (\lambda^1,\ldots,\lambda^p)$.
	\item The system is consistent and admits solutions. Furthermore, the solution space is contractible.
\end{itemize}

The induction relies on two facts : We will see that $\calP(1)$ holds because by assumption the $1$-forms $\lambda^1,\ldots,\lambda^p$ are independent at each point and this independence enables us to solve certain affine system. Whereas, for any $\alpha\ge 1$, to get $\calP(\alpha + 1)$ from $\calP(\alpha)$, we need to utilize the fact the distribution $\calD$ is in fact fat. Let us now proceed with the details.\\

\paragraph{\textbf{Induction Base Step $\calP(1)$ :}} We focus on the relation $\calR_1\subset J^2(\Sigma, M)$. Consider some jet, $$\tilde \sigma=(x,y,P_1:T_x\Sigma\to T_y M, P_2 : \Sym^2 T_x \Sigma\to T_y M)\in\calR|_{(x,y)}$$
For $1\le a < b\le 2$, i.e, for $a=1,b=2$, we have the equations defining $\calR_1$ as follows :
\begin{align*}
	\lambda^s_k(y)  P_2^k(\partial_{ab}) + \partial_\nu\lambda^s_k(y)P_1^k(\partial_b)P_1^\nu(\partial_a) = 0 \tag*{$\boxed{\partial_a\eta^s_b}$}
	\\
	\lambda^s_k(y) P_2^k(\partial_{ab}) + \partial_\nu \lambda^s_k(y) P_1^k(\partial_a)P_1^\nu(\partial_b) = 0 \tag*{$\boxed{\partial_b\eta^s_a}$}
\end{align*}
Since $\tilde\sigma\in\calR_1|_{(x,y)}$ must satisfy both $(\partial_a\eta_b^s)$ and $(\partial_b\eta_a^s)$ above, we perform $(\partial_a\eta^s_b) - (\partial_b\eta^s_a)$ and get
\begin{align*}
	0 = \Big(\partial_\nu\lambda^s_k(y) - \partial_k\lambda^s_\nu(y) \Big) P_1^\nu(\partial_a)P_1^k(\partial_b).
\end{align*}
But observe that the isotropy condition on $P_1$ is precisely $$0=d\lambda^s|_y\big(P_1(\partial_a),P_1(\partial_b)\big) = \Big(\partial_\nu\lambda^s_k(y) - \partial_k\lambda^s_\nu(y) \Big) P_1^\nu(\partial_a)P_1^k(\partial_b).$$
Hence we see that $p(\tilde \sigma) = (x,y, P_1)\in \calR|_{(x,y)}$, as $P_1$ satisfies the isotropy condition. Thus, $p$ maps $\calR_1$ into $\calR$.

Next, assume that $\sigma = (x,y, P_1:T_x\Sigma\to T_y M)\in\calR|_{(x,y)}$ is given. We need to look for a jet $\tilde{\sigma} = (x,y,P_1,P_2)\in\calR_1|_{(x,y)}$. That is, we need to find out $P_2$ satisfying the equations $\{\partial_a \eta^s_b \big| 1\le s\le p, 1\le a\le b\le 2\}$. Now for $a < b$, we see that the terms $P_2^k(\partial_{ab})$ appear linearly in both the systems $\{\partial_a\eta^s_b|1\le s\le p\}$ and $\{\partial_b\eta^s_a | 1\le s \le p\}$, with \emph{identical} coefficients. Thus we run into a question of consistency. But we have seen that the expression $(\partial_a\eta^s_b) - (\partial_b\eta^s_a)$ is identical to the isotropy condition for $P_1$. Since the jet $\sigma\in \calR$, we know that $P_1$ satisfies the isotropy condition. Thus we have that the equations $(\partial_a\eta^s_b) - (\partial_b\eta^s_a)$ is zero and so for each tuple $a\le b$, we can keep the equation labeled by $\partial_a\eta^s_b$ and remove the equation $\partial_b\eta_b^s$. We are left with the system
\begin{equation}
	\begin{pmatrix}
		\lambda^1_1(y)&\ldots&\lambda^1_N(y)\\
		\vdots && \vdots \\
		\lambda^p_1(y) &\ldots &\lambda^p_N(y)
	\end{pmatrix}\begin{pmatrix}
		P_{2}^1(\partial_{ab})\\ \vdots \\ P_{2}^N(\partial_{ab})
	\end{pmatrix} = \text{a $p\times 1$ vector involving $P_1$} \tag*{$\boxed{\lambda_2^{ab}}$} \label{eqn:lambda:1}
\end{equation}
Clearly the solution space remains unchanged.

Lastly, to show that $P_2$ can be solved, note that the above \emph{affine} system has full rank coefficient matrix, since the rows are nothing but the $1$-forms $\lambda^s$, which are given to be independent. Thus, the system admits a solution. We have proved that $p:\calR_1|_{(x,y)}\to \calR|_{(x,y)}$ is surjective with affine fibers.\\

\paragraph{\bfseries Induction Hypothesis $\calP(\alpha)$ :} Assume that for some $\alpha\ge 1$, the statement $\calP(\alpha)$ holds.\\

\paragraph{\bfseries Induction Step $\calP(\alpha) \Rightarrow \calP(\alpha+1)$ :} First observe that the jet map $p$ maps $\calR_{\alpha+1}$ into $\calR$; since we have already proved this for $\calR_1$ and the equations involved in $\calR_1$ are also present in $\calR_{\alpha+1}$. We prove that $p$ is surjective, with contractible fiber.

Fix a jet $\sigma=(x,y,P_1)\in\calR|_{(x,y)}$. Since the equation system defining $\calR_\alpha$ is included in $\calR_{\alpha+1}$, using the induction hypothesis $\calP(\alpha)$, we replace this (sub)system with the  triangular system, keeping the equations involving $P_{\alpha+2}$ untouched. Next, we solve for the tensors $P_r$ with $r \le \alpha$ from this transformed system. Note that, we could also solve for the tensors $P_{\alpha + 1}$ using the induction hypothesis, but we defer this for later. As we will see that while solving for the tensor $P_{\alpha+2}$ we will run into some consistency problem, which will introduce new sets of equations for $P_{\alpha+1}$. The following ladder like diagram gives a schematic representation of this step :
\begin{displaymath}
	\tikzset{%
		symbol/.style={
			draw=none,
			every to/.append style={
				edge node={node [sloped, allow upside down, auto=false]{$#1$}}
			},
		},
	}
	\begin{tikzcd}[row sep=.8cm]
		J^{\alpha+2}(\Sigma,M) \arrow{r}{p^{\alpha+2}_{\alpha+1}} & J^{\alpha+1}(\Sigma,M) \arrow{r}{p^{\alpha+1}_\alpha} & J^{\alpha}(\Sigma,M) \arrow{r} &\cdots \arrow{r} & J^2(\Sigma,M) \arrow{r}{p^2_1} & J^1(\Sigma,M)\\
		\calR_{\alpha+1} \arrow{r} \arrow[rdd, twoheadrightarrow, sloped, "p^{\alpha+2}_{\alpha+1}"] \arrow[symbol=\subset]{u} &\calR_{\alpha} \arrow[twoheadrightarrow]{rdd} \arrow{r} \arrow[symbol=\subset]{u} & \calR_{\alpha-1} \arrow{r} \arrow[symbol=\subset]{u} & \cdots \arrow{r} & \calR_1 \arrow{r} \arrow[twoheadrightarrow]{rdd} \arrow[symbol=\subset]{u} & \calR_0 \arrow[symbol=\subset]{u} \\
		\\
		&\calS_{\alpha} \arrow[uu, hookrightarrow, sloped, "\text{\tiny additional}", "\text{\tiny affine eqns}"'] \arrow[uul, dashed, bend left=40, sloped,  "\text{\tiny lift using}", "\text{\tiny independence of forms}"'] & S_{\alpha-1} \arrow[hookrightarrow]{uu} \arrow[l, dashed, swap, "\text{\tiny lift using}", "\text{\tiny fatness of $\calD$}"'] &&& S_0 = \calR \arrow[uu, hookrightarrow, sloped, "\text{\tiny isotropy}", "\text{\tiny (quadratic)}"'] \arrow[lll, dashed, swap, "\text{lift to $\calR_\alpha$ using induction hypothesis}", "\text{and then project to $\calS_{\alpha-1}$}"'] 
	\end{tikzcd} \label{eqn:diagram} \tag{$\dagger$}
\end{displaymath}
where, we have denoted $S_\beta = p^{\beta+2}_{\beta+1}(\calR_{\beta+1})$ as the image. So, using the induction hypothesis, we first get a lift of the jet $\sigma$ to $\calS_{\alpha-1}= p^{\alpha+1}_\alpha(\calR_\alpha)$. We now need to identify $\calS_\alpha = p^{\alpha+2}_{\alpha+1}(\calR_{\alpha+1})$, which is defined by the new set of equations coming from the consistency.\\

Let us first fix some more notations for the multi-indices. Recall that $\bbN_2^r$ is the set of all multi-indices $I$ on $\{x^1,x^2\}$, with order $|I|=r$. Now for some $J\in \bbN_2^{r+1}$ we denote $$\frI(J) = \Big\{(I,a) \;\Big|\; I\in \bbN_2^r, \; 1\le a\le 2, J=I+a\Big\}$$
Then observe that for any $(I,a)\in\frI(J)$ and some $1\le s\le p$, the equation labeled by $\partial_{I}\eta^s_a$ involves the terms $P_{r+1}^k(\partial_J)$. Explicitly, if we consider $J=(a_0, a_1,\ldots, a_r)$ with $1\le a_0\le \ldots \le a_r\le 2$, then we get $$\frI(J) = \bigcup_{i=0}^r \Big\{ (I_i, a_i) \;\Big|\; I_i := I - a_i = \big(a_0,\ldots,\hat{a_i},\ldots,a_r\big) \Big\}$$
Since we may have repetitions in the $a_i$'s, we see that $|\frI(J)| \le |J| = r+1$. In fact, since we have only two indices $\{x^1,x^2\}$, we can see that $|\frI(J)| \le 2$.\\

We make the following observation. Fix some $J=(a_0,\ldots,a_{\alpha+1})\in\bbN_2^{\alpha+2}$ with $1\le a_0\le \ldots\le a_{\alpha+1}\le 2$. Consider $(I_0,a_0)$ and $(I_j,a_j)$ in $\frI(J)$ for some $1\le j\le r + 1$ fixed. Then for each $1\le s\le p$, we have the equations $\partial_{I_0}\eta^s_{a_0}$ and $\partial_{I_j}\eta^s_{a_j}$ as follows :
\begin{align*}
	\begin{aligned}
		&\lambda^s_k(y) P_{\alpha+2}^k(\partial_J) + \partial_\nu \lambda^s_k(y) P_1^k(\partial_{a_0})P_{\alpha+1}^\nu(\partial_{I_0}) + \sum_{i=1}^{\alpha+1} \partial_\nu\lambda^s_k(y) P_1^\nu(\partial_{a_i}) P_{\alpha+1}^k(\partial_{J - a_i})
		\\
		&\hspace{5cm} + \text{terms involving $P_{\le \alpha}$} = 0
	\end{aligned} \tag*{$\boxed{\partial_{I_0}\eta^s_{a_0}}$} \label{eqn:jet:keep}
	\\
	\begin{aligned}
		&\lambda^s_k(y) P_{\alpha+2}^k(\partial_J) + \partial_\nu\lambda^s_k(y) P_1^k(\partial_{a_j})P_{\alpha+1}^\nu(\partial_{I_j}) + \sum_{\substack{i=0\\i\ne j}}^{\alpha+1} \partial_\nu\lambda^s_k(y) P_1^\nu(\partial_{a_i}) P_{\alpha+1}^k(\partial_{J-a_i})
		\\
		&\hspace{5cm} + \text{terms involving $P_{ \le \alpha}$} = 0
	\end{aligned} \tag*{$\boxed{\partial_{I_j}\eta^s_{a_j}}$} \label{eqn:jet:delete}
\end{align*}

If $a_0 = a_j$, then $I_0 = I_j$ and hence the equations $\partial_{I_0}\eta^s_{a_0}$ and $\partial_{I_j}\eta^s_{a_j}$ becomes identical. So without a loss of generality, we assume that $a_0 < a_j$. If no such $j$ exists, then we must have $J = (\underbrace{a_0,\ldots,a_0}_{\text{$\alpha + 2$-times}})$ which gives a single equation $\partial_{I_0}\eta^s_{a_0}$ to consider. Note that the only way $a_0 < a_j$ can hold is for some $j$ is $a_0 = 1, a_j = 2$.

Now, observe that the two systems $\{\partial_{I_0}\eta^s_{a_0}, 1\le s\le p\}$ and $\{\partial_{I_j}\eta^s_{a_j}, 1\le s\le p\}$ both look like
\begin{equation}
	\begin{pmatrix}
		\lambda^1_1(y)&\ldots&\lambda^1_N(y)\\
		\vdots && \vdots \\
		\lambda^p_1(y) &\ldots &\lambda^p_N(y)
	\end{pmatrix}\begin{pmatrix}
		P_{\alpha+2}^1(\partial_J)\\ \vdots \\ P_{\alpha + 2}^N(\partial_J)
	\end{pmatrix} = \text{a $p\times 1$ vector involving $P_{\le\alpha+1}$} \tag*{$\boxed{\lambda_{\alpha+2}^J}$} \label{eqn:lambda:alpha1}
\end{equation}
And thus we run into the question of consistency from these two systems. To address this issue, we first perform $(\partial_{I_0}\eta^s_{a_0})-(\partial_{I_j}\eta^s_{a_j})$ and get
\begin{equation}
	\begin{aligned}
		&\Big(\partial_\nu\lambda^s_k(y) - \partial_k\lambda^s_\nu(y)\Big) \Big(P_1^k(\partial_{a_0})P_{\alpha+1}^\nu (\partial_{I_0}) - P_1^k(\partial_{a_j}) P_{\alpha+1}^\nu(\partial_{I_j})\Big)\\
		&\hspace{4cm} + \text{terms involving $P_{\le \alpha}$} = 0
	\end{aligned}\tag*{$\boxed{(\partial_{I_0}\eta^s_{a_0}) - (\partial_{I_j}\eta^s_{a_j})}$} \label{eqn:jet:diff}
\end{equation}

Note that the difference does not involve any $P_{\alpha + 2}$ terms at all. We keep the equations labeled by (\hyperref[eqn:jet:keep]{$\partial_{I_0}\eta^s_{a_0}$}) and for each $j=1,\ldots, \alpha+1$, we replace the equation (\hyperref[eqn:jet:delete]{$\partial_{I_j}\eta^s_{a_j}$}) by the equation (\hyperref[eqn:jet:diff]{$(\partial_{I_0}\eta^s_{a_0}) - (\partial_{I_j}\eta^s_{a_j})$}); provided $a_0 < a_j$. Clearly this does not change the solution space, but introduces \emph{new} set of equations involving $P_{\le \alpha + 1}$. The system is still affine. These added set of equations, together with the original system involving $P_{\alpha+1}$, now define $\calS_{\alpha} = p^{\alpha+2}_{\alpha+1}(\calR_{\alpha+1})$ (see diagram (\ref{eqn:diagram})). Note that, at this point we are left with exactly one system that involves $P_{\alpha+2}^k(\partial_J)$, which looks like (\hyperref[eqn:lambda:alpha1]{$\lambda^J_{\alpha+2}$}) as above.

Let us now fix the dictionary order $\prec$ on $\bbN_2^{\alpha+1}$, induced by the obvious ordering of the coordinate indices. Then we have $I_j \prec I_0$, since the first position they differ must be larger than $a_0$. We rewrite the equation (\hyperref[eqn:jet:diff]{$(\partial_{I_0}\eta^s_{a_0}) - (\partial_{I_j}\eta^s_{a_j})$}) as
\begin{equation}
	\begin{aligned}
		\Big(\partial_\nu\lambda^s_k(y) - \partial_k\lambda^s_\nu(y)\Big) P_1^k(\partial_{a_0})P_{\alpha+1}^\nu (\partial_{I_0}) &= \Big(\partial_\nu\lambda^s_k(y) - \partial_k\lambda^s_\nu(y)\Big) P_1^k(\partial_{a_j}) P_{\alpha+1}^\nu(\partial_{I_j})\\
		&\qquad + \text{terms with $P_{\le \alpha}$}
	\end{aligned} \tag*{$\boxed{(\partial_{I_0}\eta^s_{a_0}) - (\partial_{I_j}\eta^s_{a_j})}$} \label{eqn:jet:diff1}
\end{equation}
and add these to the set of equations that is used to solve for $P^k_{\alpha+1}(I_0)$, which, from from the induction hypothesis, is given as the system (\hyperref[eqn:lambda:indHypo]{$\lambda^{I_0}_{\alpha+1}$}).

Observe that we are adding at most one new system of $p$-many equations for each tensor $P_{\alpha + 1}^k(\partial_I)$, since we have $|\frI(I)|\le 2$. In fact, it is clear that for any fixed $|I|=\alpha+1$, the system that we are adding looks like
\begin{align*}
	\Big(\partial_\nu\lambda^s_k(y) - \partial_k\lambda^s_\nu(y)\Big) P_1^k(\partial_{1})P_{\alpha+1}^\nu (\partial_{I}) = \text{known terms with $P_{\le \alpha}$ and $P_{\alpha+1}(\partial_{I^\prime})$ with $I^\prime \prec I$,} \quad \text{for $s= 1,\ldots,p$.}
\end{align*}
Furthermore, we are not adding \emph{any} equation when $I = \hat I :=  (\underbrace{1,\ldots,1}_{\alpha+1})$, which is the least element in the ordering $\prec$.\\

We claim that we are able to solve the $P_{\alpha+1}$-terms in a triangular fashion, ordered by $\prec$. To see this, first observe that the only equation system that involves $P_{\alpha+1}(\partial_{\hat I})$ looks like (\hyperref[eqn:lambda:indHypo]{$\lambda^{\hat I}_{\alpha+1}$}). But this affine system admits solutions, since the coefficient matrix has full rank. Now, inductively assume that for some multi-index $I$, with $|I| = \alpha+1$ and $\hat I \prec I$, we have solved the tensor $P_{\alpha + 1}(\partial_{I^\prime})$ for any $I^\prime\prec I$. For this $I$, we have added the following set of equations,
\begin{align*}
	\Big(\partial_\nu\lambda^s_k(y) - \partial_k\lambda^s_\nu(y)\Big) P_1^k(\partial_{a})P_{\alpha+1}^\nu (\partial_{I}) = \text{known terms with $P_{\le \alpha}$ and $P_{\alpha+1}(\partial_{I^\prime})$ with $I^\prime \prec I$,}
\end{align*}
for each $1\le s\le p$, to the system (\hyperref[eqn:lambda:indHypo]{$\lambda^I_{\alpha+1}$}). That is, we have an \emph{affine} system in $P_{\alpha+1}(\partial_I)$ given as  follows :
\begin{equation}
	\begin{pmatrix}
		\lambda^1_1(y) &\ldots &\lambda^1_N(y) \\
		\vdots && \vdots \\
		\lambda^p_1(y) &\ldots &\lambda^p_N(y) \\
		\\
		\big(\partial_\nu\lambda^1_1(y) - \partial_1\lambda^1_\nu (y)\big) P_1^\nu(\partial_1) &\ldots & \big(\partial_\nu\lambda^1_N(y) - \partial_N\lambda^1_\nu (y)\big) P_1^\nu(\partial_1)\\
		\vdots && \vdots \\
		\big(\partial_\nu\lambda^p_1(y) - \partial_1\lambda^p_\nu (y)\big) P_1^\nu(\partial_1) &\ldots & \big(\partial_\nu\lambda^p_N(y) - \partial_N\lambda^p_\nu (y)\big) P_1^\nu(\partial_1)
	\end{pmatrix}\begin{pmatrix}
		P_{\alpha+1}^1(\partial_I) \\ \vdots \\ P_{\alpha+1}^N(\partial_I)
	\end{pmatrix} =  \parbox{2.5cm}{\centering \small A $2p\times 1$-vector of known terms involving $P_{\le \alpha}$, and $P_{\alpha+1}(\partial_{I^\prime})$ with $I^\prime\prec I$}
	\tag*{$\boxed{d\lambda_{\alpha+1}^I}$} \label{eqn:lamda:dlambda}
\end{equation}
Note that the rows of the $(2p\times N)$-sized coefficient matrix above are, respectively, the $1$-forms on $T_y M$, $$\lambda^1\big|_y, \ldots, \lambda^p\big|_y,\quad  -\iota_{P(\partial_1)}d\lambda^1\big|_y, \ldots,-\iota_{P(\partial_1)}d\lambda^p\big|_y,$$
written with respect to the basis $\{dy^1,\ldots,dy^N\}$. Hence the matrix is full rank precisely when we have the wedge $$\big(\lambda^1 \wedge\ldots \wedge \lambda^p\big) \wedge \big(\iota_{P_1(\partial_1)}d\lambda^1\big) \wedge\ldots\wedge \big(\iota_{P_1(\partial_1)}d\lambda^p\big)$$
is non-zero. But since $\calD$ is taken to be fat, for any vector $0\ne v\in\calD_y$, and so in particular for $v = P_1(\partial_1)$, we have that $$\lambda^1\wedge\ldots\wedge\lambda^p\wedge\iota_vd\lambda^1\wedge\ldots\wedge\iota_vd\lambda^p \ne 0,$$
which follows from \autoref{rmk:fatAndSymplectic}. Thus the coefficient matrix in the system (\hyperref[eqn:lamda:dlambda]{$d\lambda^I_{\alpha+2}$}) above indeed has full rank. We can then inductively solve the tensor $P_{\alpha+1}$ completely. At this point, we have a lift of the jet $\sigma$ to $\calS_\alpha$. 

Lastly, for each multi-index $J$ of order $\alpha+2$, we can easily solve $P_{\alpha+2}(\partial_J)$ from the affine systems (\hyperref[eqn:lambda:alpha1]{$\lambda^J_{\alpha+2}$}), which has full rank coefficient matrix. Thus we have obtained a jet $\tilde{\sigma}\in\calR_{\alpha+1}$ so that $p(\tilde{\sigma})=\sigma$. Clearly the solution space $p^{-1}(\sigma)$ is contractible, since at each stage we have solved affine system of equations, in a triangular fashion. Thus we have proved that $\calP(\alpha+1)$ is true.\\

This concludes the induction. Furthermore, it is clear from the algorithmic approach above that we can get lift of arbitrary sections of $\calR$ to $\calR_\alpha$, along the map $p$, using local triviality arguments. Then, from the sheaf theoretic argument presented in \cite[pg. 76-78]{gromovBook}, we have that $p : \Gamma\calR_\alpha \to \Gamma\calR$ is a weak homotopy equivalence. This completes the proof of \autoref{lemma:jetLiftingGeneral}.

\section{Appendix : Hamilton's Implicit Function Theorem}
\label{sec:hamiltonIFT}
Nash's Implicit Function Theorem \cite{nashSmoothIsometric} in the context of $C^\infty$-isometric immersions has been generalized by several authors. Here we recall Hamilton's formalism of an infinite dimensional implicit function theorem that works for smooth differential operators between Fr\'echet spaces. This theorem is used crucially in order to get the local h-principle (\autoref{thm:openInversion}) of horizontal maps into corank $2$ fat distributions which admit Reeb directions. To begin with, we discuss the basic notions of tame spaces and tame operators from the exposition by Hamilton (\cite{hamiltonNashMoser}).

\begin{defn}{\normalfont \cite[pg. 67]{hamiltonNashMoser}}\label{defn:frechetSpace}\index{Fr\'echet space}
	A \emph{Fr\'echet space} is a complete, Hausdorff, metrizable, locally convex topological vector space.
\end{defn}

In particular the topology of a Fr\'echet space $F$ is given by a countable collection of semi-norms $\{|\cdot|_n\}$, such that a sequence $f_j\to f$ if and only if $|f_j - f|_n\to 0$ for all $n$, as $j\to \infty$. A choice of this collection of norms is called a grading on the space and we say $(F,\{|\cdot|_n\})$ is a \emph{graded} Fr\'echet space.
\begin{example}\label{exmp:frechetSpaces}
	Many naturally occurring spaces are in fact Fr\'echet spaces.
	\begin{enumerate}
		\item Every Banach space $(X,|\cdot|_X)$ is a Fr\'echet space. It may also be graded if we set $|\cdot|_n=|\cdot|_X$ for all $n$ (\cite[pg. 68]{hamiltonNashMoser}).
		
		\item Given a compact manifold $X$, possibly with boundary, the function space $C^\infty(X)$ is a graded Fr\'echet space. More generally, given any vector bundle $E\to X$, the space of sections $\Gamma (E)$ is also a graded Fr\'echet space. The $C^k$-norms on the sections give a possible grading (\cite[pg. 68]{hamiltonNashMoser}).
		
		\item Given a Banach space $(X,|\cdot|_X)$, denote by $\Sigma (X)$ the space of exponentially decreasing sequences of $X$, which consists of sequences $\{x_k\}$ of elements of $X$, such that, $$|\{x_k\}|_n = \sum_{k=0}^\infty e^{nk} |x_k|_X  < \infty,\quad \forall n\ge 0.$$
		Then $\Sigma(X)$ is a graded Fr\'echet space with the norms defined above (\cite[pg. 134]{hamiltonNashMoser}).
	\end{enumerate}
\end{example}
\begin{defn}{\normalfont \cite[pg. 135]{hamiltonNashMoser}}\label{defn:tameLinearMap}\index{tame linear map}
	A linear map $L:F\to G$ between Fr\'echet spaces $F,G$ is said to satisfy \emph{tame estimates} of degree $r$ and base $b$ if there exists a constant  $c=c(n)$ such that, $$|Lf|_n \le C|f|_{n+r}, \quad \forall n\ge b,\quad \forall f\in F.$$
	$L$ is said to be tame if it satisfies the tame estimates for some $n$ and $r$.
\end{defn}
\begin{example} \label{exmp:tameOperators} 
	We have that a large class of operators are in fact tame.
	\begin{enumerate}
		\item \label{exmp:tameOperators:1} A linear partial differential operator $L:C^\infty(X)\to C^\infty(X)$ of order $r$ satisfies the tame estimate $|Lu|_n \le |u|_{n+r}$ for all $n\ge 0$ and hence $L$ is tame of degree $r$ (\cite[pg. 135]{hamiltonNashMoser}).
		
		\item \label{exmp:tameOperators:2} Inverses of elliptic, parabolic, hyperbolic and sub-elliptic operators are tame maps (\cite[pg. 67]{hamiltonNashMoser}). In particular, the solution of an elliptic boundary value problem is tame (\cite[pg. 161]{hamiltonNashMoser}).
		
		\item \label{exmp:tameOperators:3} Composition of two tame maps is again tame (\cite[pg. 136]{hamiltonNashMoser}).
	\end{enumerate}
\end{example}

\begin{defn}{\normalfont \cite[pg. 136]{hamiltonNashMoser}} \label{defn:tameDirectSummand}
	Given graded Fr\'echet spaces $F,G$, we say $F$ is a \emph{tame direct summand} of $G$ if there are tame linear maps $L:F\to G$ and $M:G\to F$ such that the composition $ML:F\to F$ is the identity.
\end{defn}


\begin{defn}{\normalfont \cite[pg. 136]{hamiltonNashMoser}}\label{defn:tameFrechetSpace}
	A Fr\'echet space $F$ is said to be \emph{tame} if $F$ is a tame direct summand of $\Sigma(X)$, for some Banach space $X$.
\end{defn}

\begin{example}
	\label{exmp:sectionSpaceIsTame}
	Given a compact manifold $X$, possibly with boundary, and a vector bundle $E\to X$, the section space $\Gamma(E)$ is a \emph{tame} Fr\'echet space (\cite[pg. 139]{hamiltonNashMoser}).
\end{example}

\begin{defn}\cite[pg. 143]{hamiltonNashMoser}\label{defn:tameSmoothMap}\index{tame smooth map}
	A map $P:U\subset F\to G$ between Fr\'echet spaces $F$ and $G$, defined over some open set $U\subset F$, is said to be a \emph{smooth tame map} if $P$ is smooth and all the derivatives $D^kP$ are tame linear maps.
\end{defn}

We now state the inverse function theorem.

\begin{theorem}{\normalfont \cite[pg. 171]{hamiltonNashMoser}}\label{thm:hamiltonNashMoser}
	Consider tame Fr\'echet spaces $F,G$ and a tame smooth map $P:U \to G$, where $U\subset F$ is open. Suppose for the derivative $DP(f)$ at $f\in U$, the equation $DP(f)h = k$ admits unique solution $h=VP(f)k$ for each $k\in G$. Furthermore, assume that $VP:U\times G\to F$ is a smooth tame map. Then $P$ is locally invertible and each local inverse $P^{-1}$ is smooth tame.
\end{theorem}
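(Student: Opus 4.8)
The statement is Hamilton's form of the Nash--Moser inverse function theorem, and the plan is to run the standard regularized Newton iteration. Fix $f_0\in U$ and put $g_0=P(f_0)$; after translating in $F$ and $G$ we may assume $f_0=0$, $g_0=0$. The first step is to exploit that $F$, being a tame direct summand of some $\Sigma(X)$, carries a family of \emph{smoothing operators} $S_\theta:F\to F$ for $\theta\ge 1$, satisfying the interpolation bounds
\[
|S_\theta u|_a\le C_{a,b}\,\theta^{a-b}\,|u|_b\quad(a\ge b),\qquad
|u-S_\theta u|_a\le C_{a,b}\,\theta^{a-b}\,|u|_b\quad(a\le b),
\]
which let one trade regularity for smallness in both directions. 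With these in hand one sets, starting from $f_0=0$ with residual $e_0=g$,
\[
f_{n+1}=f_n+S_{\theta_n}\,VP(f_n)\,e_n,\qquad e_{n+1}=g-P(f_{n+1}),
\]
along a rapidly increasing schedule $\theta_n=\theta_0^{\kappa^{\,n}}$ for a fixed $\kappa\in(1,2)$ and $\theta_0$ large, chosen depending on how small $g$ is.

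Next I would set up the convergence estimates. Writing the second-order Taylor expansion $P(f_{n+1})=P(f_n)+DP(f_n)(f_{n+1}-f_n)+Q_n$, where the quadratic remainder $Q_n$ is controlled by the tame estimate for $D^2P$ (available since $P$ is smooth tame), and using the hypothesis $DP(f_n)\,VP(f_n)=\mathrm{Id}$, the residual recursion collapses to
\[
e_{n+1}=DP(f_n)\,(\mathrm{Id}-S_{\theta_n})\,VP(f_n)\,e_n\;-\;Q_n .
\]
The first term is small because $(\mathrm{Id}-S_{\theta_n})$ buys a negative power of $\theta_n$ at the price of a \emph{higher} norm of $VP(f_n)e_n$, a norm that is itself kept bounded by the induction; the second term is quadratically small in $e_n$. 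So the core of the proof is a simultaneous induction carrying (i) bounds on $|f_n|_a$ over a fixed band of indices, strong enough to keep $f_n$ inside $U$ and to license the tame estimates for $P$, $DP$, $VP$, $D^2P$; (ii) bounds $|e_n|_a\lesssim\theta_n^{-\mu}$ over that band; and (iii) the resulting bounds on $|f_{n+1}-f_n|_a$. The delicate point — and the main obstacle — is to choose $\kappa$, the band width, the base index, and the exponents so that the recursion genuinely closes: the quadratic gain must defeat the fixed polynomial derivative loss baked into the smoothing, after which $\sum_n|f_{n+1}-f_n|_a<\infty$ for every $a$ in the band, $f_n\to f$ in $F$, and $P(f)=g$. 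This is precisely the mechanism behind the quantitative statement in \autoref{thm:solution}, with its fixed loss of two derivatives between $g-g_0$ and $f-f_0$.

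Finally I would dispatch uniqueness, tameness and smoothness of the inverse. Uniqueness of the solution $f$ near $0$ with $P(f)=g$ follows from injectivity of $DP$ (integrating the linearization along a path between two candidate solutions). Tameness of $P^{-1}$ is read off by tracking norms through the iteration, yielding a uniform estimate $|f-f_0|_a\lesssim|g-g_0|_{a+\ell}$ for a fixed loss $\ell$. For smoothness, continuity of $g\mapsto P^{-1}(g)$ comes from the same estimates; its derivative, where defined, must equal $VP(P^{-1}(g))$ by the uniqueness hypothesis on $DP$, and since $VP$ is smooth tame while $P^{-1}$ is already continuous tame, a bootstrap — differentiating $P\circ P^{-1}=\mathrm{Id}$ and using the chain rule within the tame category — upgrades $P^{-1}$ to a smooth tame map, which completes the proof.
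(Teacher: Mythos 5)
A preliminary remark: the paper contains no proof of this statement. \autoref{thm:hamiltonNashMoser} is Hamilton's Nash--Moser inverse function theorem, reproduced in the appendix from \cite{hamiltonNashMoser} precisely so it can be invoked as a black box in \autoref{sec:inversion}. So your proposal can only be compared with Hamilton's published proof. Your outline is the classical route: smoothing operators $S_\theta$ on a tame space (which do exist, exactly because a tame space is a direct summand of some $\Sigma(X)$), a regularized Newton iteration, and the residual recursion $e_{n+1}=DP(f_n)(\mathrm{Id}-S_{\theta_n})VP(f_n)e_n-Q_n$, which you have computed correctly. Hamilton organizes the argument differently on the surface --- he works in the model spaces of exponentially decreasing sequences and treats injectivity and local surjectivity as separate theorems --- but the analytic engine is the same, so the strategy is sound and standard.

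That said, two genuine gaps remain. First, the step you label ``the delicate point'' \emph{is} the theorem: choosing $\kappa$, the band of indices, the base index $b$ and the exponent $\mu$ so that the tame estimates for $DP$, $VP$ and $Q_n$ (each with its own degree and base) close the simultaneous induction is where all the work lives, and where naive choices fail because the polynomial derivative loss in $|VP(f_n)e_n|_{a+\text{high}}$ can outrun the gain $\theta_n^{a-b}$ from $(\mathrm{Id}-S_{\theta_n})$. Asserting that ``the quadratic gain must defeat the fixed polynomial loss'' is a statement of the goal, not a proof; without the explicit bookkeeping the argument is incomplete, and this bookkeeping is also what produces the concrete loss of derivatives quoted in \autoref{thm:solution}. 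Second, your uniqueness argument does not work as stated: writing $P(f_1)-P(f_0)=\bigl(\int_0^1 DP\bigl((1-t)f_0+tf_1\bigr)\,dt\bigr)(f_1-f_0)$ requires the \emph{averaged} linear operator to be injective, and pointwise invertibility of each $DP(f_t)$ does not imply this in the Fr\'echet setting (there is no Neumann series or open set of invertibles to appeal to). Hamilton handles local injectivity by a separate quantitative estimate of the form $|f_1-f_0|_b\le C|P(f_1)-P(f_0)|_{b+r}$, derived from the tame bound on $VP$ together with a smallness assumption on $|f_1-f_0|$ in a higher norm; some such argument must replace your one-line path integration. The smoothness-of-inverse bootstrap in your last paragraph is essentially Hamilton's and is fine once the first two points are repaired.
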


\begin{remark}\label{rmk:hamiltonIFTBannachIFT}
	Unlike the inverse function theorem for Banach spaces, one needs to have that the derivative $DP$ is invertible on an \emph{open} set $U\subset F$.
\end{remark}

\section*{Acknowledgment}
The author would like to thank Mahuya Datta, Adi Adimurthi and Partha Sarathi Chakraborty for fruitful and enlightening discussions. The author would also like to thank the anonymous referee for many valuable comments and suggestions.

\bibliographystyle{alphaurl}
\bibliography{references}

\end{document}